\theoremstyle{plain}
\newtheorem{theorem}{Theorem}[section]
\newtheorem{corollary}[theorem]{Corollary}
\newtheorem{lemma}[theorem]{Lemma}
\newtheorem*{proposition*}{Proposition}
\newtheorem*{lemma*}{Lemma}
\newtheorem*{corollary*}{Corollary}
\theoremstyle{definition}
\newtheorem{definition}{Definition}[section]
\newtheorem{remark}[definition]{Remark}
\numberwithin{equation}{section}
\renewcommand{\le}{\leqslant}
\renewcommand{\ge}{\geqslant}
\newcommand{\Mun}{\mathds{1}}
\newcommand{\Mdemi}{\frac{1}{2}}
\newcommand{\mdemi}{\tfrac{1}{2}}
\newcommand{\Mdede}[4]{
\begin{pmatrix}
#1&#2  \\
#3&#4 \\
\end{pmatrix}
}
\newcommand{\mdede}[4]{
\left(
\begin{smallmatrix}
#1&#2  \\
#3&#4 
\end{smallmatrix}
\right)
}
\DeclareMathOperator {\GL} {GL}
\newcommand{\BmC}{\mathbb{C}}
\newcommand{\BmN}{\mathbb{N}}
\newcommand{\BmR}{\mathbb{R}}
\DeclareMathOperator {\Msym} {sym}
\newcommand{\abs}[1]{\ensuremath{\left|#1\right|}}
\DeclareMathOperator {\MRe}  {\Re e}
\DeclareMathOperator {\MRes} {Res}
\newcommand{\Sprim}{\mathcal{S}^*_k(N)}
\newcommand{\SpM}{\mathcal{S}^*_\kappa(M,\chi)}
\newcommand{\tdemi}{\tfrac{1}{2}}
\begin{document}

\title[]{First moment of Rankin-Selberg central L-values and subconvexity in the level aspect}


\author[]{Roman Holowinsky}
\address{Department of Mathematics, The Ohio State University, 100 Math Tower, 231 West 18th Avenue, Columbus, OH 43210-1174}
\email{holowinsky.1@osu.edu}
\author[]{Nicolas Templier}   
\address{Department of Mathematics, Fine Hall, Washington Road, Princeton, NJ 08544-1000.}
\email{templier@math.princeton.edu}


\date{\today}
\keywords{Automorphic forms, special values of $L$-functions, Rankin--Selberg convolution, trace formula, subconvexity}
\subjclass[2010]{11F67,11F11,11L07}

\begin{abstract}
Let $1\le N<M$ with $N$ and $M$ coprime and square-free. Through classical analytic methods we estimate the first moment of central $L$-values 
$
L(\tfrac{1}{2},f\times g)
$
where $f\in \Sprim$ runs over primitive holomorphic forms of level $N$ and trivial nebentypus and $g$ is a given form of level $M$. As a result, we recover the bound 
$
L(\tfrac{1}{2},f\times g) \ll_\varepsilon (N + \sqrt{M}) N^\varepsilon M^\varepsilon
$
when $g$ is dihedral.  The first moment method also applies to the special derivative $L'(\mdemi,f\times g)$ under the assumption that it is non-negative for all $f\in \Sprim$.
\end{abstract}

\maketitle


\tableofcontents


\section{Introduction}\label{sec:intro} 

In this paper, we investigate the subconvexity problem for certain Rankin-Selberg $L$-functions in the level aspect. A special feature, compared to other works in the subject, is that the method is relatively straightforward, at least in the prime level case of \S\ref{sec:pf}.   
This seems to suggest that special restrictions on the automorphic cusp form $\pi$ simplifies the subconvexity problem for $L(\mdemi,\pi)$.

The restrictions on $\pi$ are essentially of two kinds. The first assumption is that $L(\mdemi,\pi)$ be non-negative. This should be the same as requiring $\pi$ to be self-dual. For an unconditional result, we require further that $\pi$ be symplectic, so that the theorem of Lapid--Rallis~\cite{LR03} applies. The second assumption is that $\pi$ be a Rankin-Selberg convolution of two forms whose levels are coprime and of different size.  These restrictions are non-generic in the sense that we capture a small portion of the whole set of automorphic forms. They are nevertheless interesting, because such forms $\pi$ are easily constructed and often occur in applications to arithmetic problems.

The first occurrence of such a result is in the work of Michel--Ramakrishnan~\cite{MR07} and in the second-named author's PhD thesis~\cite{Temp:these}. The subconvexity bounds came for free, in some cases, as a direct result of an exact formula for the first moment of central $L$-values (see \cite{MR07}*{Corollary~2}). The formula is based on the Fourier expansion of the kernel occurring in the first step of the proof of the Gross formula. It was rather striking that subconvexity followed as an immediate corollary to an exact formula for the first moment and therefore Michel and Ramakrishnan raised the question as to whether a purely analytic proof would be possible.

The results of Michel--Ramakrishnan~\cite{MR07} have been greatly generalized by Feigon--Whitehouse~\cite{FW08} using the relative trace formula and Waldspurger formula. Notably the subconvexity bounds have been extended to number fields with the same quality of exponents. The approach in~\cite{Temp:these} is based on Zhang's notion of geometric pairing of CM cycles~\cite{Zhan01b} and the explicit computations in the original Gross--Zagier article~\cite{GZ}. The approach through Fourier expansions has been reinterpreted by Nelson~\cite{Nelson:stable} based on the computations by Goldfeld--Zhang of the kernels for the Rankin--Selberg convolution. Nelson's work is perhaps the closest to ours analytically, but focuses on stable averages rather than subconvexity. 

Our present approach is conceptually simpler than in the previous works.
We rely on the Rankin-Selberg convolution rather than explicit period formulas and use little input from spectral theory. The treatment of the first moment is indeed purely analytic, using well-known analytic tools: approximate functional equation, Petersson trace formula, Vorono\"i formula; it is therefore flexible enough to adapt to seemingly more complicated cases. We illustrate this aspect with a subconvex bound for the special derivative in the Gross-Zagier formula, in which case our result is new.

Let $M\ge 1$ be a square-free integer and let $g$ be a holomorphic cusp form of fixed weight $\kappa \ge 2$ on the congruence subgroup
\begin{equation}
\Gamma_1(M)=\{\mdede{a}{b}{c}{d},\ ad-bc=1,\ M|c,\ M|(a-1),\ M|(d-1)\}.
\end{equation}
  In the present context we can assume indifferently that $g$ is a newform because the $L$-functions associated to $g$ depend only on the automorphic representation it generates. Let $\chi$ be the Dirichlet character modulo $M$ which is the nebentypus of $g$. 

  Now let $N$ be a square-free integer coprime with $M$ and $k\ge 2$ be a fixed even integer. We let $S_k(N)$ be the vector space of holomorphic cusp forms of level $N$, weight $k$ and trivial nebentypus. It is equipped with a Petersson inner product and an action of the Hecke operators $T_n$. We denote by $\Sprim$ the set of primitive newforms and  $\omega_f=N^{1+o(1)}$ the associated spectral weights as in~\S\ref{s:petersson}.

Let $f\in \Sprim$ and denote by $\lambda_f$ and $\lambda_g$ the normalized Hecke eigenvalues of $f$ and $g$ as described in \S\ref{sec:autforms}. The Rankin-Selberg $L$-function $L(s,f\times g)$ of degree four
\begin{equation}
L(s,f\times g)=L^{(N)}(2s,\chi)\sum_{n\ge 1} \frac{\lambda_f(n)\lambda_g(n)}{n^s}
\end{equation}
admits an analytic continuation to all of $\BmC$ and a functional equation
\begin{equation}
  \Lambda(s,f\times g)= (\frac{NM}{4\pi})^s L_\infty(s,f\times g) L(s,f\times g) = \epsilon(f\times g) \Lambda(1-s,f\times \overline{g}).
\end{equation}
Here $L_\infty(s,f\times g)$ is a product of two complex $\Gamma$ factors,  see~\S\ref{sec:pre:RS} for details. The (arithmetic) conductor equals $\mathcal{Q}:=(NM)^2$. It happens often that the root number $\epsilon(f\times g)$ depends only on $N$ and $g$, such as in the present case where $M$ and $N$ are coprime. 
We now give a statement of our main theorem.
\begin{theorem}\label{th:intro} Let $1\le N<M$ with $N$ and $M$ coprime and square-free. For $g\in \SpM$ we have
\begin{equation}
  \sum_{f\in \Sprim} \omega_f^{-1} L(\tfrac{1}{2},f\times g)\ll_\varepsilon \left(  1 + \frac{\sqrt{M}}{N} \right) N^\varepsilon M^\varepsilon.
\end{equation}
The same bound holds with $L^{(j)}(\mdemi+it, f \times g)$ for any fixed integer $j\ge 0$ and $t\in \BmR$.
\end{theorem}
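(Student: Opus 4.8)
The plan is to run the classical first-moment argument outlined in the introduction: approximate functional equation, Petersson trace formula, Vorono\"i summation. It suffices to treat $L(\mdemi,f\times g)$, since the derivatives and shifts $L^{(j)}(\mdemi+it,f\times g)$ only introduce $\log$-powers and a change of $\gamma$-factors in the weight functions below, absorbed into $N^\varepsilon M^\varepsilon$. As $L(s,f\times g)$ has analytic conductor $\asymp(NM)^2$, the approximate functional equation attached to $\Lambda(s,f\times g)=\epsilon(f\times g)\Lambda(1-s,f\times\overline g)$ gives
\[
L(\mdemi,f\times g)=\sum_{(d,N)=1}\frac{\mu(d)\chi(d)}{d}\sum_{e\ge1}\frac{\lambda_f(e)\lambda_g(e)}{\sqrt e}\,V\!\left(\frac{d^2e}{NM}\right)+\epsilon(f\times g)\cdot(\text{dual term}),
\]
with $V$ smooth of rapid decay (so effectively $d^2e\ll NM$), the $d$-sum coming from the factor $L^{(N)}(2s,\chi)$; the dual term has the same shape with $\overline{g}$ for $g$ and is treated identically, so I focus on the displayed one.

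Next I would sum against $\omega_f^{-1}$ over $f\in\Sprim$ and invoke the Petersson trace formula (Lemma 2), together with M\"obius sieving to pass from the full space $S_k(N)$ to the newforms of square-free level $N$; this replaces $\lambda_f(e)$ by the diagonal $\Un_{e=1}$ plus Kloosterman terms $c^{-1}S(e,1;c)J_{k-1}(4\pi\sqrt e/c)$ with $c\equiv0\pmod N$. The diagonal contributes $\sum_{(d,N)=1}\mu(d)\chi(d)d^{-1}V(d^2/NM)\ll N^\varepsilon M^\varepsilon$ --- this is the $1$ in the statement. The remaining off-diagonal is
\[
\mathcal E=\sum_{(d,N)=1}\frac{\mu(d)\chi(d)}{d}\sum_{\substack{c\ge1\\N\mid c}}\frac1c\sum_{e\ge1}\frac{\lambda_g(e)}{\sqrt e}\,V\!\left(\frac{d^2e}{NM}\right)J_{k-1}\!\left(\frac{4\pi\sqrt e}{c}\right)S(e,1;c);
\]
using $J_{k-1}(x)\ll\min(x^{k-1},x^{-1/2})$ and Weil's bound, only $c\ll\sqrt e\,N^\varepsilon M^\varepsilon$ matter, which together with $N\mid c$ already forces $d\ll\sqrt{M/N}\,N^\varepsilon M^\varepsilon$ and confines $c$ to $N\le c\ll\sqrt{NM}\,N^\varepsilon M^\varepsilon/d$.

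Then I would open $S(e,1;c)=\sum_{a\,(c)}^{*}e((ae+\bar a)/c)$ and apply Vorono\"i summation for the level-$M$ newform $g$ (Lemma 4, in the ramified form of Lemma 5 when $(c,M)>1$) to $\sum_e\lambda_g(e)e(ae/c)W_c(e)$, where $W_c(e)=e^{-1/2}V(d^2e/NM)J_{k-1}(4\pi\sqrt e/c)$; the coprimality $(N,M)=1$ keeps the interaction of $c$ and $M$ under control, and $g$ being cuspidal there is no polar main term. This produces $\frac{1}{c\sqrt M}\sum_m\lambda_g(m)\,e(-\overline{aM}m/c)\,\widetilde W_c(m)$, and carrying out the $a$-sum collapses the exponential to a Ramanujan sum, $\sum_{a\,(c)}^{*}e(\bar a(1-\overline{M}m)/c)=c_c(M-m)$, so that
\[
\mathcal E=\sum_{(d,N)=1}\frac{\mu(d)\chi(d)}{d}\sum_{N\mid c}\frac{1}{c^2\sqrt M}\sum_{m\ge1}\lambda_g(m)\,c_c(M-m)\,\widetilde W_c(m)+(\text{ramified terms}).
\]

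The main obstacle is estimating this dual sum. It requires precise control of $\widetilde W_c(m)$: inserting the Bessel asymptotics into its defining integral and analysing it by stationary phase and explicit integral bounds (this is the purpose of Lemma 1) shows that $\widetilde W_c(m)$ lives on the dual range $m\ll c^2d^2/N$ up to $m\asymp M$, has size $\ll c\,N^\varepsilon M^\varepsilon$ there, is negligible outside, and carries a genuine stationary point only near $m\asymp M$. Feeding this into the dual sum, using $|c_c(M-m)|\le(c,M-m)$ and mean-value / Vorono\"i bounds for $\lambda_g$ over short intervals and progressions, then summing over the short ranges of $c$ --- where $N\mid c$ is decisive, since $\sum_{N\mid c,\ c\le Y}c^{-1}\ll N^{-1}N^\varepsilon M^\varepsilon$ and this is what yields the factor $1/N$ --- and over $d$, one arrives at $\mathcal E\ll(\sqrt M/N)N^\varepsilon M^\varepsilon$; adding the diagonal proves the theorem. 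The points I expect to be genuinely delicate, rather than routine bookkeeping, are: (i) extracting the full power $N^{-1}$ from the $c$-sum, which exploits both $N\mid c$ and $N<M$; (ii) making every bound uniform in $g$; and (iii) checking that the ramified moduli $(c,M)>1$ and the dual side of the functional equation never contribute more than $(1+\sqrt M/N)N^\varepsilon M^\varepsilon$.
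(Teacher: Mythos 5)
Your proposal follows essentially the same route as the paper: approximate functional equation, Petersson/ILS sieve to the newforms, opening the Kloosterman sums, Vorono\"i summation for $g$, reduction to Ramanujan sums, stationary-phase/Bessel-integral analysis, and a final count. Two small remarks on where you diverge cosmetically or underspecify a genuinely delicate step. First, you expand $L^{(N)}(2s,\chi)$ as an explicit $d$-sum in the approximate functional equation, whereas the paper folds that factor into the weight function $V$ via $\widehat V(s)=\frac{L_\infty(\mdemi+s,f\times g)}{L_\infty(\mdemi,f\times g)}L^{(N)}(1+2s,\chi)G(s)$, so there is no auxiliary $d$ in their main sum; both are fine. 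Second, when $N$ is composite square-free the ``M\"obius sieving'' is substantially more than a one-liner producing moduli $c\equiv 0\pmod N$: the Iwaniec--Luo--Sarnak identity gives, for each factorisation $LR=N$, Kloosterman moduli $c\equiv 0\pmod R$ with $R$ possibly a proper divisor of $N$, plus auxiliary $\ell$-sums and an $\ell_1$-sum to accommodate $n$ not coprime to $N$; the paper proves a dedicated variant (Lemma~\ref{l:ILS-modified}), then needs Selberg's identity on $S(\ell^2,n;c)$ and the Hecke multiplicativity of $\lambda_g$ to return to the shape $S(nD,1;q)$, $q\equiv0\pmod R$, amenable to Lemma~\ref{lem:VoronoiApp}. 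Your heuristic ``$\sum_{N\mid c}c^{-1}\ll N^{-1}$'' therefore isn't literally the right invariant; the $N^{-1}$ ultimately comes from $\sum_{LR=N}\frac{1}{LR}=\frac{d(N)}{N}$ after these reductions. None of this changes the strategy, and the rest of your outline (including the observation that the dual weight localises $m\asymp M$, that Weil+Bessel decay truncate $c\ll\sqrt{NM}$, and that the diagonal gives the $1$) matches the paper's argument.
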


If one is willing to apply Theorem~\ref{th:intro} to the subconvexity problem, it is necessary to have an extra assumption. We would want each $L$-value $L(\mdemi,f\times g)$ to be non-negative. In the next subsection we proceed to discuss the significance of this assumption and the instances where it is known to hold.  When one does have non-negativity, we obtain the following bound for an individual $L$-function.
\begin{corollary}\label{co:intro} Let $g\in \SpM$ and suppose $L(\mdemi,f\times g)\ge 0$ for all $f\in \Sprim$. Then:
  \begin{equation}\label{boundL12}
L(\tfrac{1}{2},f\times g) \ll_\varepsilon \left(  N + \sqrt{M} \right) N^\varepsilon M^\varepsilon
\end{equation}
In particular if $N=M^{\frac12+o(1)}$ then
\begin{equation}
  L(\tfrac{1}{2},f\times g) \ll \mathcal{Q}^{\frac16+o(1)}.
\end{equation}
The same bounds hold for the first derivative under the same assumption that $L'(\mdemi,f\times g)\ge 0$ for all $f\in \Sprim$.
\end{corollary}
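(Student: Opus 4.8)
The plan is to establish Theorem~\ref{th:intro} via the standard amplification-free first moment machinery, and then deduce Corollary~\ref{co:intro} by positivity. Since the corollary is the ``final statement'', I focus on it, assuming Theorem~\ref{th:intro}. First I would write the approximate functional equation for $L(\tfrac12,f\times g)$ so that
\begin{equation}
L(\tfrac12,f\times g) = \sum_{n\ge 1} \frac{\lambda_f(n)\lambda_g(n)}{\sqrt{n}} V\!\left(\frac{n}{NM}\right) + (\text{dual sum}),
\end{equation}
where $V$ is a smooth rapidly decaying cutoff, essentially supported on $n \ll (NM)^{1+\varepsilon}$. The point is that one single term $f_0\in\Sprim$ of the spectral average in Theorem~\ref{th:intro} is bounded by the whole sum, because every summand $\omega_f^{-1}L(\tfrac12,f\times g)$ is non-negative by hypothesis. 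Hence
\begin{equation}
\omega_{f_0}^{-1} L(\tfrac12, f_0\times g) \le \sum_{f\in\Sprim} \omega_f^{-1} L(\tfrac12,f\times g) \ll_\varepsilon \left(1+\frac{\sqrt M}{N}\right) N^\varepsilon M^\varepsilon.
\end{equation}
Multiplying through by $\omega_{f_0} = N^{1+o(1)}$ gives exactly $L(\tfrac12,f_0\times g) \ll_\varepsilon (N+\sqrt M) N^\varepsilon M^\varepsilon$, which is \eqref{boundL12}.

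For the ``in particular'' clause I would simply substitute $N = M^{1/2+o(1)}$ into \eqref{boundL12}: then $N + \sqrt M = M^{1/2+o(1)}$, while the conductor is $\mathcal{Q} = (NM)^2 = M^{3+o(1)}$, so $M^{1/2} = \mathcal{Q}^{1/6+o(1)}$ and the bound $L(\tfrac12,f\times g)\ll \mathcal{Q}^{1/6+o(1)}$ follows. This beats the convexity exponent $1/4$, so it is genuinely subconvex; I would remark that the balance $N\asymp\sqrt M$ is precisely the point where the two terms $N$ and $\sqrt M$ in Theorem~\ref{th:intro} equalize, which is why the exponent is optimized there.

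For the derivative statement, the only modification is that Theorem~\ref{th:intro} already supplies the same bound for $\sum_f \omega_f^{-1} L'(\tfrac12,f\times g)$ (the ``$L^{(j)}(\tfrac12+it,\cdot)$ for fixed $j\ge0$'' clause with $j=1$, $t=0$), and under the hypothesis $L'(\tfrac12,f\times g)\ge 0$ for all $f\in\Sprim$ the same term-by-term positivity argument isolates one form and yields $L'(\tfrac12,f\times g)\ll_\varepsilon (N+\sqrt M)N^\varepsilon M^\varepsilon$. I would note that non-negativity of the central derivative is the natural analogue hypothesis (consistent with the Gross--Zagier/Birch--Swinnerton-Dyer philosophy that $L'(\tfrac12,\cdot)$ should be a height, hence $\ge 0$) and point forward to the discussion in the next subsection. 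There is essentially no obstacle in the corollary itself: all the analytic difficulty — the approximate functional equation, Petersson trace formula, Vorono\"i summation, and the off-diagonal estimates producing the $\sqrt M/N$ term — is contained in the proof of Theorem~\ref{th:intro}; the corollary is a two-line dropping of all but one non-negative term followed by renormalization. The one thing to be careful about is that $\omega_f = N^{1+o(1)}$ uniformly, so that the renormalization really does cost only $N^{o(1)}$ and the $N^\varepsilon$ can absorb it.
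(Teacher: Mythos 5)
Your proposal is correct and matches the paper's (implicit) derivation exactly: by the positivity hypothesis each term $\omega_f^{-1}L(\tfrac12,f\times g)$ in Theorem~\ref{th:intro} is non-negative, so a single term is dominated by the full sum; multiplying by $\omega_{f_0}=N^{1+o(1)}$ and absorbing the $N^{o(1)}$ into $N^\varepsilon$ gives \eqref{boundL12}, the substitution $N=M^{1/2+o(1)}$ together with $\mathcal{Q}=(NM)^2=M^{3+o(1)}$ gives the $\mathcal{Q}^{1/6+o(1)}$ bound, and the derivative clause uses the $j=1$, $t=0$ case of Theorem~\ref{th:intro} verbatim. The opening sentence invoking the approximate functional equation is unnecessary once you take Theorem~\ref{th:intro} as given, but it is harmless.
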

Note that~\eqref{boundL12} is better than the convexity bound when $M^\delta<N<M^{1-\delta}$ for any $\delta>0$ and matches Corollary~2 in \cite{MR07} and Theorem~1.4 in \cite{FW08}.
The above Theorem and Corollary are made possible by the following main estimate.
\begin{lemma}\label{lem:VoronoiApp}
  Let $Z\ge 1$. Let $k$ and $\kappa$ be fixed positive integers.  Let $q, D, M$ be positive integers with $M$ square-free.  Let $g\in \SpM$ be a newform of weight $\kappa$, level $M$ and nebentypus~$\chi$. Then the sum 
$$
 \sum_{n}^{}
  \frac{\lambda_g(n)}{\sqrt{n}}
S\Big(n D,1;q\Big)
  J_{k-1}\left( \frac{4\pi \sqrt{nD}}{q} \right)
h\left(\frac{n}{Z}\right)
$$
where $h$ is a smooth function, compactly supported in $[1/2,5/2]$ with bounded derivatives is
$$
 \ll_\varepsilon \left(\sqrt{\frac{Z}{M_2}}+q_2 \big(1+P\big)\sqrt{\frac{M_2}{Z}}\right) \frac{P^2}{(1+P)^3} (qDMZ)^{\varepsilon}
$$
where $P=\dfrac{\sqrt{ZD}}{q}$, $q_2=\dfrac{q}{(D,q)}$ and $M_2=\dfrac{M}{(q_2,M)}$.
\end{lemma}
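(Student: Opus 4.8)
The plan is to apply Voronoi summation (Lemma 4) to the $n$-sum in order to exploit cancellation coming both from the oscillation of the Bessel function $J_{k-1}$ and from the sign changes of the Hecke eigenvalues $\lambda_g(n)$. First I would fix notation: write $q = (D,q)\, q_2$ and split the modulus $q$ according to its common factor with $D$, so that the Kloosterman sum $S(nD,1;q)$ twists nicely under $n\mapsto$ residues mod $q$. The key analytic object is the weight function
\begin{equation*}
W(x) = J_{k-1}\!\left(\frac{4\pi\sqrt{xD}}{q}\right) h\!\left(\frac{x}{Z}\right),
\end{equation*}
whose relevant scale is governed by the parameter $P = \sqrt{ZD}/q$: for $P\ll 1$ the Bessel factor is essentially of size $P^{k-1}$ and non-oscillatory, while for $P\gg 1$ it oscillates with frequency $\sim P$, and one should extract its leading asymptotic $J_{k-1}(y)\sim y^{-1/2}(a e^{iy}+\bar a e^{-iy})$. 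This dichotomy is exactly what produces the shape $P^2/(1+P)^3$ in the final bound, which is $\asymp P^2$ for small $P$ (two factors of $P^{1/2}$ from $W$ and its derivatives being tame, roughly) and $\asymp 1/P$ for large $P$.

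Next I would open the Kloosterman sum and split into residue classes, reducing the sum to additively twisted sums $\sum_n \lambda_g(n) n^{-1/2} e(an D \bar{}\,/q) W(n)$ to which the Voronoi formula for $g$ of level $M$ applies. Because $M$ is square-free and we only care about the dependence on $(q_2,M)$, the Voronoi transformation replaces the modulus-$q$ additive character by a dual sum over $m$ with the Kloosterman/Salié-type factor at modulus $q_2 M_2$ where $M_2 = M/(q_2,M)$, and replaces $W$ by its Hankel-type integral transform $\widetilde W$. The dual length is $m \asymp (q_2 M_2)^2/Z$ up to $(qDMZ)^\varepsilon$ by stationary phase / integration by parts in $\widetilde W$, using that the total oscillation of $W$ is of order $(1+P)$; this is where the two competing terms $\sqrt{Z/M_2}$ and $q_2(1+P)\sqrt{M_2/Z}$ arise, the first being the contribution of the dual-zero frequency (the ``main term'' of Voronoi, controlled trivially by the $L^1$-mass of $W$, which is $\asymp Z \cdot P^2/(1+P)^3 / \sqrt Z$), the second being the genuine dual sum estimated with the Weil bound $|S(\cdot,\cdot;q_2M_2)|\ll (q_2M_2)^{1/2+\varepsilon}$ together with the Rankin-Selberg bound $\sum_{m\le X}|\lambda_g(m)|\ll X^{1+\varepsilon}$.

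The main obstacle I expect is the careful bookkeeping in the Voronoi step when $(q,M)>1$: one must track how the level $M$ interacts with the modulus $q$ of the Kloosterman sum, correctly identify the dual modulus as $q_2 M_2$, and verify that the arithmetic factors (Gauss sums, the character $\chi$, the gcd's) all combine to give precisely the claimed dependence on $q_2$ and $M_2$ rather than on $q$ and $M$ separately — this is what makes the estimate strong enough to be useful in the level aspect. The second delicate point is the analysis of the integral transform $\widetilde W$ in the oscillatory regime $P\gg1$: one needs a uniform stationary-phase expansion valid as $P\to\infty$ with explicit control of the error, so that the truncation of the dual sum at $m\ll (q_2M_2)^2 P^{2}/(ZD) \cdot (qDMZ)^\varepsilon$ (equivalently $m \ll (q_2M_2)^2(1+P)^2/Z$) is legitimate and the tail is negligible. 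Once both transforms are under control, assembling the pieces — summing the residue classes, inserting the Weil and Rankin--Selberg bounds, and collecting the powers of $P$ — yields the stated estimate, and the appearance of $P^2/(1+P)^3$ should be checked to be consistent in both regimes as a sanity test at the end.
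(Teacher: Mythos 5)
The analytic part of your outline is sound: opening the Kloosterman sum by residue classes, pulling $D_1=(D,q)$ out of the modulus, applying Voronoi for $g$, and localizing the dual variable via stationary-phase / integration-by-parts on the Bessel integral are all exactly the moves the proof makes, and the factor $P^2/(1+P)^3$ does come from the two Bessel asymptotic regimes as you say.

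The genuine gap is arithmetic, and it is the heart of the lemma. After Voronoi is applied to $\sum_n \lambda_g(n)\, e(nD_2\overline{\alpha}/(q/D_1))\,W(n)$, the dual additive character is $e\bigl(-m\,\alpha\,\overline{D_2M_2}/(q/D_1)\bigr)$, and re-summing over the $\alpha$ coming from the opened Kloosterman sum $S(nD,1;q)$ gives
\begin{equation*}
\sideset{}{^\ast}\sum_{\alpha\,(q)} e\!\left(\frac{\overline{\alpha}}{q}-\frac{m\,\alpha\,\overline{D_2M_2}}{q/D_1}\right)
= \sideset{}{^\ast}\sum_{\alpha\,(q)} e\!\left(\frac{\overline{\alpha}\,(1-mD_1\overline{D_2M_2})}{q}\right),
\end{equation*}
which is a \emph{Ramanujan} sum, not a Kloosterman or Sali\'e sum at modulus $q_2M_2$. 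This is precisely the point (going back to Goldfeld, as noted in \S\ref{s:sketch}) that makes the first-moment calculation work: the Ramanujan sum, expanded via M\"obius over divisors $q_1\mid q$, selects an \emph{arithmetic progression} $D_1 n\equiv D_2M_2\pmod{q/q_1}$ on which the dual Hecke eigenvalues are estimated trivially by Deligne's bound. Your proposed substitute --- the Weil bound $|S(\cdot,\cdot;q_2M_2)|\ll (q_2M_2)^{1/2+\varepsilon}$ combined with Rankin--Selberg on the dual sum --- is the wrong arithmetic and would cost you an extra factor of roughly $(q_2M_2)^{1/2}$, so the claimed bound would not be recovered. Relatedly, your identification of the two terms in the lemma is off: the $\sqrt{Z/M_2}$ term is not a Voronoi ``dual-zero frequency'' main term but simply the ``$1$'' in the count of lattice points in the short arithmetic progression (the case where the window is shorter than the modulus $q/q_1$), while the $q_2(1+P)\sqrt{M_2/Z}$ term is the generic count of such $n$ times the uniform Bessel bound, and already uses $D_1\mid q_1$, which the Ramanujan-sum structure forces.
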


\begin{remark}
One should compare this bound with the ``trivial'' bound 
$
\sqrt{Zq} P(1+P)^{-3/2} (qDMZ)^{\varepsilon}
$
obtained by an application of the Weil bound for individual Kloosterman sums along with \eqref{Joss} and \eqref{JWbounds}.  In particular, consider the case of the transition range for the Bessel function, i.e. $P\sim 1$. Furthermore, one may slightly relax the conditions for bounds on the smooth function $h$ and still obtain similar results.
\end{remark}

\subsection{Non-negativity of central values.}\label{sec:intro:positivity} 
 We discuss in this subsection the above question of non-negativity of $L(\mdemi,f\times g)$ and $L'(\mdemi,f\times g)$. We review what is known unconditionally and what is expected. Non-negativity of central values is a deep fact and there are several works that rely on using this property.  Indeed, non-negativity allows for the study of moments of odd order in application to subconvexity. See notably Ivic~\cite{Ivic:nonnegative}, Conrey--Iwaniec~\cite{CI00}, Li~\cite{Li:subconvex} and Blomer~\cite{Blom:GL3}.   We begin by recalling what is known in the general case and then draw the consequences in our setting.  This is related to the classification of automorphic forms and the Gross--Zagier formula. 

For an irreducible $L$-function $L(s,\pi)$ to have real Dirichlet coefficients we assume that $\pi$ is self-dual. Then $L(s,\pi)>0$ for all $s\ge 1$. Assuming GRH it would follow that $L(\mdemi,\pi)\ge 0$ and that if $L(\mdemi,\pi)=0$ then $L'(\mdemi,\pi)\ge 0$.

We now recall in which cases these inequalities are known unconditionally. According to the Arthur classification, the cuspidal self-dual representation $\pi$ is either symplectic or orthogonal. Namely exactly one of the $L$-functions $L(s,\pi, \wedge^2)$ (adjoint) or $L(s,\pi,\Msym^2)$ (symmetric square) has a (simple) pole at $s=1$. We say that $\pi$ is symplectic (resp. orthogonal) when the adjoint (resp. symmetric square) $L$-function has a pole. A general theorem of Lapid-Rallis~\cite{LR03} says that if $\pi$ is symplectic then $L(\mdemi,\pi)\ge 0$. If $\pi$ is orthogonal it is not known unconditionally that $L(\mdemi,\pi)\ge 0$, though we expect moreover that $L(\mdemi,\pi)>0$.
For example when $\pi$ is a $\GL(1)$ quadratic Dirichlet character this is an open question related to the effective class number problem~\cite{Iwaniec:conversations}*{\S4}.

Let $\pi=f\times g$ be the Rankin--Selberg convolution. The existence of $\pi$ as an automorphic representation on $GL(4)$ is established by Ramakrishnan~\cite{Rama00}. It remains to investigate under which conditions on the forms $f$ and $g$ the representation $\pi=f\times g$ can be self-dual and under which conditions it can be symplectic. 

We consider only the case when $f$ and $g$ are both self-dual which assures that $\pi$ is self-dual. It is not difficult to determine whether a $GL(2)$ form is self-dual because the contragredient of a $GL(2)$ form is its twist by the inverse of the central character. Thus it is necessary that the central characters of $f$ and $g$ are quadratic (i.e. real and valued in $\{\pm 1\}$). See also~\cite{book:Iwan:topic}*{Chap. 6} where, in the classical language, it is shown that if the nebentypus is a quadratic character then the $GL(2)$ form is an eigenvector of the Fricke involution. Note that the central character of $\pi$ should be trivial because it is the square of the product of the central characters of $f$ and $g$. 

For $\pi$ to be symplectic it is necessary and sufficient that one of the forms $f$ and $g$ be orthogonal and the other be symplectic.  A $GL(2)$ form is symplectic if and only if its central character is trivial. It seems advantageous to average over a family of symplectic forms. This is why in Theorem~\ref{th:intro} we average over the family $f\in \Sprim$, while the other form $g$ will be assumed to be orthogonal. 

A $GL(2)$ form $g$ is orthogonal if
and only if it is dihedral. The central character is always non-trivial since
it equals the character of the quadratic extension it is associated with.  
The average over the dihedral forms $g$ also can be considered, see~\cites{LMY12,Temp:shifted} and the references there.

Let $g\in \SpM$. We have arrived at the following conclusion concerning the non-negativity assumption for the special value.
\begin{itemize}
  \item If $g$ is self-dual (that is if $\chi$ is quadratic) then we expect $L(\mdemi,f\times g)\ge 0$ for all $f\in \Sprim$;
  \item this is known unconditionally if $g$ is dihedral, which is the case treated in~\cites{MR07,FW08}. 
\end{itemize}

Next we need to analyse the situation at a finer level in order to take into account the sign of the root number. Assume that $\pi$ is self-dual; the root number $\epsilon(f\times g)=\epsilon(\mdemi,\pi)$ is $\pm 1$. In fact for $f\in \Sprim$ and $g\in \SpM$ self-dual we will see in~\eqref{efg} that
\begin{equation*}
  \epsilon(f\times g) = 
  \begin{cases}
	\chi(-N), & \text{if $\kappa\ge k$,}\\
	\chi(N),  & \text{if $k\ge \kappa$.}
  \end{cases}
\end{equation*}
If $\epsilon(f\times g)=-1$ we have $L(\mdemi,f\times g)= 0$ in which case it is interesting to focus on the derivative $L'(\tfrac12,f\times g)$. Its non-negativity is not yet known in general (e.g. if one of the forms in the convolution was taken to be Maass, a case not considered here). However, non-negativity is known in an important special case which is the celebrated Gross-Zagier formula~\cite{GZ}. In our context of Theorem~\ref{th:intro} this is known by the recent work of~Yuan--Zhang--Zhang~\cite{YZZ:GZ} if $f$ and $g$ have weight $k=\kappa=2$. 

We should also mention the Waldspurger theorem~\cite{Wald85} which gives a period formula for $L(\mdemi,f\times g)$ when $f$ is symplectic and $g$ is dihedral and also implies non-negativity. In the present paper we do not use the Waldspurger period formula in the proof of Theorem~\ref{th:intro} which is an important difference with~\cites{MR07,FW08}. Non-negativity is a robust property which can be established independently of period formulas as shown by the results of Lapid--Rallis~\cites{LR03,Lapid:RS}.

\subsection{Detailed outline of the first moment method}\label{s:sketch} We illustrate the main ideas in this subsection by giving a detailed outline of the first moment method under simplifying assumptions.

First assume that we have two distinct prime levels $N$ and $M$ with $1<N<M$. We fix a newform $g\in \SpM$ and shall average $L(\frac{1}{2}, f\times g)$ over the collection of newforms $f\in \mathcal{S}^{\ast}_{k}(N)$ with trivial nebentypus. We further assume that the space of cusp forms  is equal to the space of newforms as happens to be the case for example when the weight $k\leqslant 10$ or $k=14$ and the level $N$ is prime since for such $k$ there are no oldforms of full level. After a standard approximate functional equation argument, we see that the problem reduces to obtaining estimates for sums of the form
\begin{equation}
\sum_{f\in \Sprim} \omega_f^{-1} \sum_{n}\frac{\lambda_f(n)\lambda_g(n)}{\sqrt{n}} V\left(\frac{n}{NM}\right)\label{sketchstart}
\end{equation}
with $V$ as in \S\ref{sec:pre:app}. Assuming non-negativity for each $f$, bounding the above by $(M/N)^{1/2}$ would produce the convexity bound for any individual $L$-function.  We note that such a bound is weaker than Lindel\"{o}f on average over $f$ if $M^\delta<N<M^{1-\delta}$ for any $\delta>0$.

Summing over $f$ via Petersson's trace formula, we arrive at 
$$
\sum_{n}\frac{\lambda_g(n)}{\sqrt{n}}V\Bigl(
\frac{n}{NM}
\Bigr)\left\{\delta(n,1)+2\pi i^{-k}\sum_{\substack{c>0\\c\equiv 0(N)}}\frac{1}{c}S(n,1;c) J_{k-1}\left(\frac{4\pi\sqrt{n}}{c}\right)\right\}.
$$
 The ``diagonal'' delta term contributes $V(1/NM)$ which is of size $1$.  This prevents one from proving better than Lindel\"{o}f on average over the family.  To treat the ``off-diagonal'' sums, we proceed in a manner motivated by the earlier works of Kowalski--Michel--Vanderkam~\cite{KMV02}, Harcos--Michel~\cite{HM06} and Michel~\cite{Mich04} and also seen recently in \cite{HoMu}.
 
We start by breaking the $n$-sum into dyadic segments, of lengths $Z$ say, through a smooth partition of unity with some nice compactly supported test function $h$.  For the purpose of this outline, we focus on the case of $Z=NM$. The Weil bound for individual Kloosterman sums and standard bounds for the Bessel functions (see~\S\ref{sec:pre:bessel}) allow us to initially truncate the $c$-sum to length $(NM)^{A}$ with the tail bounded by $(NM)^{-B}$ for some positive $A$ and $B$.  Furthermore, since $M$ is prime, we can restrict to those $c$ which are coprime with $M$ by using the same bounds. 

We now proceed with our analysis of these off-diagonal terms by reducing to ``shifted sums''.  To achieve this, we change the order of summation
\begin{equation}
\sum_{\substack{c\leqslant (NM)^A \\ c\equiv 0(N)\\ (c,M)=1}}\frac{1}{c} \sum_n \frac{\lambda_g(n)}{\sqrt{n}}S(1,n;c) J_{k-1}\left(\frac{4 \pi \sqrt{n}}{c}\right) h\left(\frac{n}{NM}\right). \label{sketchOpenK}
\end{equation}
and consider the inner $n$-sum for each fixed $c$.   What follows are the main ideas behind Lemma~\ref{lem:VoronoiApp}; the details appear in \S\ref{sec:pflem}.  

Opening the Kloosterman sum, an application of Vorono\"{i} summation in $n$ changes the inner sum in \eqref{sketchOpenK}, up to some bounded constant factor, to
$$
\frac{\sqrt{N}}{c}\sum_n \lambda_{g^\ast}(n)\sideset{}{^\ast}\sum_{\alpha(c)} e\left(\frac{\alpha(1-n\overline{M})}{c}\right)I_c(n) 
$$
for some other newform $g^\ast$ of weight $\kappa$ and level $M$ with 
$$
I_c(n)= \int_0^{\infty}\frac{h(\xi)}{\sqrt{\xi}}J_{k-1}\left(\frac{4\pi \sqrt{\xi NM}}{c}\right)J_{\kappa-1}\left(\frac{4\pi \sqrt{n\xi N}}{c}\right) d\xi.
$$
Analysis of the integral $I_c(n)$ restricts the $n$-sum to roughly those $n$ satisfying $|\sqrt{M}-\sqrt{n}| \ll \dfrac{c}{\sqrt{N}}$. Furthermore, we have the bound $I_c(n) \ll \dfrac{P^2}{(1+P)^3}$ with $P=\dfrac{\sqrt{NM}}{c}$ by Lemma~\ref{lem:Icn} (whose proof is given in the \S\ref{sec:pre}).

The arithmetic advantage of Vorono\"{i} summation is that one now has Ramanujan sums instead of Kloosterman sums for each modulus $c$. Such an idea is well-known and was already seen in a work of Goldfeld \cite{G}. We write the Ramanujan sums as
$$
\sideset{}{^\ast}\sum_{\alpha(c)} e\left(\frac{\alpha(M-n)}{c}\right)=\sum_{\delta \nu = c} \mu(\delta) \sum_{\alpha(\nu)}e\left(\frac{\alpha(M-n)}{\nu}\right)
$$
and the inner sums over $\alpha$ will detect the congruence condition $n\equiv M (\nu)$ with a loss of $\nu=c/\delta$. Therefore, we have reduced \eqref{sketchOpenK} to bounding
\begin{equation}
N^{3/2} M\sum_{\substack{c\leqslant (NM)^A\\ c\equiv 0(N)\\ (c,M)=1}}\frac{1}{c^3(1+\frac{\sqrt{NM}}{c})^{3}}\sum_{\delta \nu = c}\frac{1}{\delta}\sum_{\substack{n\equiv M(\nu)\\|\sqrt{M}-\sqrt{n}| \ll \tfrac{c}{\sqrt{N}}}} |\lambda_g^\ast(n)|.\label{sketchShift}
\end{equation}
The Ramanujan-Petersson bound for Hecke eigenvalues then allows one to conclude that
$$
\sum_{f\in \Sprim} \omega_f^{-1} \sum_{n}\frac{\lambda_f(n)\lambda_g(n)}{\sqrt{n}} h\left(\frac{n}{NM}\right) \ll \frac{\sqrt{M}}{N}(NM)^{\varepsilon}.
$$
Note that, unlike in the case of the second moment \cite{HoMu}, the ``zero shift'' $n=M$ did not need to be treated separately to obtain the above bound, i.e. one does not need to use the additional fact (see \eqref{SpecialHecke}) that $|\lambda_g^\ast(M)|=M^{-1/2}$. Combining this with the contributions of the other dyadic segment in $n$ gives Theorem~\ref{th:intro} and finally Corollary~\ref{co:intro}
\begin{equation}
L( \tfrac{1}{2}, f\times g) \ll_\varepsilon \sqrt{NM}\left(\frac{\sqrt{N}}{\sqrt{M}}+\frac{1}{\sqrt{N}}\right)(NM)^{\varepsilon} \label{moral}
\end{equation}
for any individual $L$-function in our family. 

We have written the subconvexity bound in the above form in order to comment on the significance of each term appearing on the right hand side of \eqref{moral}. The first term comes from the trivial diagonal in Petersson's trace formula and represents Lindel\"{o}f on average. The ratio $(N/M)^{1/2}$ provides a natural upper boundary for the size of $N$ relative to $M$ and shows that if $N$ and $M$ are of the same size, then one loses the advantage of having analytically distinguishable choices in which family to average over.  The second term comes from analysis of the off-diagonal in Petersson's trace formula.  The ratio $(1/N)^{1/2}$ provides the natural lower boundary and shows that $N$ must have some significant size relative to $M$ in order for our first moment average to be non-trivial.

\section{Preliminaries}\label{sec:pre}

\subsection{Bessel functions}\label{sec:pre:bessel}
We record here some standard facts about the $J$-Bessel functions as can be seen in~\cite{book:WW} as well as several estimates for integrals involving Bessel function which will be required for our application.  One may write the $J$-Bessel functions as 
\begin{equation}
J_k(x) = e^{ix} W_k(x)+e^{-ix} \overline{W}_k(x) \label{Joss}
\end{equation}
where 
\begin{equation}
W_k(x)=\frac{e^{i(\frac{\pi}{2}k-\frac{\pi}{4})}}{\Gamma(k+\frac{1}{2})}\sqrt{\frac{2}{\pi x}} \int_0^{\infty} e^{-y}(y(1+\frac{iy}{2x}))^{k-\frac{1}{2}}dy \label{JW}
\end{equation}
which, when $k$ is a positive integer, one has that
\begin{equation}
x^j W_k^{(j)}(x) \ll \frac{x}{(1+x)^{3/2}}. \label{JWbounds}
\end{equation}
Using the above facts leads us to the following results.

\begin{lemma}\label{lem:Icn}
  Let $k, \kappa\ge 2$ be fixed integers and let $a,b>0$. Define
  $$
  I(a,b):= \int_{0}^\infty \frac{h(\xi)}{\sqrt{\xi}} J_{k-1}(4 \pi a\sqrt{\xi}) J_{\kappa-1}(4 \pi b\sqrt{\xi}) d\xi,
  $$
where $h$ is a smooth function compactly supported on $\left[\frac{1}{2},\frac{5}{2}\right]$ with bounded derivatives. We have 
\begin{equation} 
  I(a,b)\ll_{j,h}  \frac{a}{(1+a)^{3/2}} \frac{b}{(1+b)^{3/2}}\abs{a- b}^{-j} 
\end{equation}
for any $j\ge 0$.
\end{lemma}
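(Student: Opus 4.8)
The plan is to reduce the estimate to repeated integration by parts after isolating the oscillatory part of each Bessel factor via \eqref{Joss}. Two preliminary remarks organize the argument. First, since $k,\kappa\ge2$, the power series for $J_m$ together with \eqref{Joss}--\eqref{JWbounds} yields the uniform bound $\abs{J_m(x)}\ll x(1+x)^{-3/2}$ for every integer $m\ge1$ and every $x>0$; inserting absolute values in the defining integral and using that $h$ is supported where $\xi\asymp1$ gives the trivial estimate
$$I(a,b)\ll_h \frac{a}{(1+a)^{3/2}}\,\frac{b}{(1+b)^{3/2}},$$
which is precisely the assertion for $j=0$ and, since $\abs{a-b}^{-j}\gg_j1$ whenever $\abs{a-b}\le1$, also disposes of the range $\abs{a-b}\le1$. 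Second, by the symmetry of the integrand under interchanging $(a,k)$ with $(b,\kappa)$ I may assume $a\le b$; together with $\abs{a-b}>1$ this forces $b>1$. After the substitution $\xi=u^2$ one has
$$I(a,b)=2\int_0^\infty h(u^2)\,J_{k-1}(4\pi a u)\,J_{\kappa-1}(4\pi b u)\,du,$$
an integral over $u\in[1/\sqrt2,\sqrt{5/2}]$, so that $u\asymp1$ on the support.

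Consider first the principal case $1\le a\le b$. Substituting \eqref{Joss} into both Bessel factors expresses the integrand as a sum of four terms of the shape $e^{4\pi i(\varepsilon a+\varepsilon'b)u}A_{\varepsilon,\varepsilon'}(u)$ with $\varepsilon,\varepsilon'\in\{\pm1\}$, where $A_{\varepsilon,\varepsilon'}(u)$ is $2h(u^2)$ times a product of one of $W_{k-1},\overline W_{k-1}$ evaluated at $4\pi au$ and one of $W_{\kappa-1},\overline W_{\kappa-1}$ evaluated at $4\pi bu$. Because $u\asymp1$ and $a,b\ge1$, the bound \eqref{JWbounds} gives, for every $\ell\ge0$,
$$\frac{d^\ell}{du^\ell}W_{k-1}(4\pi au)\ll_\ell \frac{a}{(1+a)^{3/2}},\qquad \frac{d^\ell}{du^\ell}W_{\kappa-1}(4\pi bu)\ll_\ell \frac{b}{(1+b)^{3/2}},$$
and likewise for the conjugates; hence by the Leibniz rule $A_{\varepsilon,\varepsilon'}^{(\ell)}(u)\ll_\ell \frac{a}{(1+a)^{3/2}}\frac{b}{(1+b)^{3/2}}$ uniformly on the support.

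I then integrate by parts $j$ times in $u$. The phase has derivative $4\pi i(\varepsilon a+\varepsilon'b)$, which equals $a+b$ when $\varepsilon=\varepsilon'$ and $\abs{a-b}$ when $\varepsilon\ne\varepsilon'$; since $a,b>0$ one has $a+b\ge\abs{a-b}$, so in all four cases $\abs{\varepsilon a+\varepsilon'b}\ge\abs{a-b}>1$. The boundary terms vanish because $h$ is compactly supported in $(0,\infty)$, so each of the four terms is $\ll_j (4\pi\abs{a-b})^{-j}\int\abs{A_{\varepsilon,\varepsilon'}^{(j)}(u)}\,du\ll_j \abs{a-b}^{-j}\frac{a}{(1+a)^{3/2}}\frac{b}{(1+b)^{3/2}}$, and summing gives the lemma when $1\le a\le b$.

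It remains to treat $a<1<b$. Here I keep $J_{k-1}(4\pi au)$ inside the amplitude: from its power series and $k\ge2$, its $u$-derivatives of every order are $\ll_\ell a$ on the support, which is $\asymp a(1+a)^{-3/2}$; I decompose only $J_{\kappa-1}(4\pi bu)$ by \eqref{Joss} and integrate by parts $j$ times against the frequency $4\pi b$, noting $b>\abs{a-b}>1$. This again produces the bound $\ll_j\abs{a-b}^{-j}\frac{a}{(1+a)^{3/2}}\frac{b}{(1+b)^{3/2}}$. The only genuine subtlety — and the point I would be most careful about — is that \eqref{JWbounds} is useful only for Bessel argument $\gtrsim1$, so one must extract the oscillation from a given Bessel factor only in that regime and bound the small-argument factor instead by its power series; once the sizes of $a$ and $b$ are separated in this way, everything reduces to routine non-stationary phase.
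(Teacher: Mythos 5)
Your argument is essentially the paper's own proof: substitute $\xi=w^2$, decompose each Bessel factor via \eqref{Joss} into the four terms with linear phases $e(2w(\pm a\pm b))$, whose derivatives are all of size $\ge 4\pi|a-b|$, and integrate by parts $j$ times using \eqref{JWbounds} to control the amplitude. The extra care you take in the range $a<1<b$---decomposing only the large-argument factor and bounding $J_{k-1}(4\pi au)$ and its $u$-derivatives directly by $O_\ell(a)$ from the power series, since \eqref{JW}--\eqref{JWbounds} is only a serviceable description of $W_{k-1}$ when its argument is bounded below---is a legitimate refinement that the paper's one-line ``repeated integration by parts'' leaves implicit.
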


\begin{proof}
A change of variables, $\xi=w^2$, gives
$$
I(a,b)=2\int_0^{\infty} h(w^2)\;  J_{k-1}\left(4\pi a w\right)J_{\kappa-1}\left(4 \pi b w\right)dw.
$$
Therefore, we see from \eqref{Joss} that $I(a,b)$ may be written as the sum of four similar terms, one of them being
$$
\int_0^{\infty} e\left(2w(a-b)\right)h(w^2) \;W_{k-1}\left(4\pi a w\right)\overline{W}_{\kappa-1}\left(4\pi b w\right)dw.
$$
Repeated integration by parts and an application of \eqref{JWbounds} gives the desired result.
\end{proof}

\subsection{Automorphic forms} \label{sec:autforms} We have two integers $M\ge 1$ and $\kappa\ge 2$ and $\chi$ a Dirichlet character of modulus $M$. Recall that we denote by $\mathcal{S}_\kappa(M,\chi)$ the vector space of weight $\kappa$ holomorphic cusp forms with level $M$ and nebentypus $\chi$. 
We have the Fourier expansion
\begin{equation}
g(z)=\sum_{n\ge 1}
\psi_g(m)m^{(\kappa-1)/2}
e(mz).
\end{equation}
The space $\mathcal{S}_\kappa(M,\chi)$ is equipped with the Petersson inner product
\begin{equation} 
\langle g_1,g_2 \rangle_M=
\int_{X_0(M)}^{}
g_1(z)\overline{g_2(z)} y^{\kappa-2} dxdy.
\end{equation}

We recall the Hecke operators $T_n$ with $(n,M)=1$. The adjoint of $T_n$ with respect to the Petersson inner product is $T_n^*=\overline{\chi(n)}T_n$, hence $T_n$ is normal. There is an orthogonal basis of $\mathcal{S}_\kappa(M,\chi)$ consisting of eigenvectors of all the Hecke operators $T_n$ with $(n,M)=1$.

The subspace of newforms of $\mathcal{S}_\kappa(M,\chi)$ is the orthogonal complement of the subspace generated by old forms of type $g(dz)$ with $g$ of level strictly dividing $M$. The set of primitive forms $\mathcal{S}_\kappa^*(M,\chi)$ is an orthogonal basis of the subspace of newforms. A primitive form $g$ is a newform which is an eigenfunction of all $T_n$ with $(n,M)=1$ and such that $\psi_g(1)=1$.

A primitive form is actually an eigenfunction of all Hecke operators, and $\lambda_g(n)=\psi_g(n)$ is the normalized eigenvalue for all $n\ge 1$. We have the Hecke relation
\begin{equation}
\lambda_g(m)\lambda_g(n)=\sum_{\substack{d\mid (m,n)\\(d,M)=1}}
\chi(d)\lambda_g(\frac{mn}{d^2})
,\quad 
m,n\ge 1.
\end{equation}
We also record here that for $g$ primitive with trivial character one has that 
\begin{equation}
\lambda_g(m^2)=\lambda_g(m)^2=m^{-1} \label{SpecialHecke}
\end{equation}
for $m | M$ (see~\cite{ILS00}*{(2.24)}).

When the nebentypus is trivial we remove it from the notation. Our primitive form $f$ of level $N$ is an element of $\Sprim$.

\subsection{Rankin-Selberg $L$-functions}\label{sec:pre:RS}
Let $f\in \Sprim$ and $g\in \SpM$ with $N$ and $M$ squarefree and $(N,M)=1$. We recall the Rankin-Selberg $L$-function
\begin{equation}\label{def:Lfg}
L(s,f\times g)=L^{(N)}(2s,\chi) \sum_{n\ge 1}^{} \frac{\lambda_f(n)\lambda_g(n)}{n^s}.
\end{equation}
It admits an analytic continuation to all of $\BmC$ and a functional equation of the form
\begin{equation}\label{def:Lambdafg}
\Lambda(s,f\times g)= (\frac{NM}{4\pi^2})^s L_\infty(s,f\times g) L(s,f\times g) = \epsilon(f\times g) \Lambda(1-s,f\times g).
\end{equation}

The product of Gamma factors reads 
\begin{equation}\label{Lfg}
L_\infty(s,f\times g)=
\Gamma(s+\frac{\abs{k-\kappa}}{2})\Gamma(s+\frac{k+\kappa}{2}-1).
\end{equation}

According to~\cite{KMV02} the epsilon factor equals
\begin{equation}\label{efg}
\epsilon(f\times g)=
\begin{cases}
\chi(-N)\eta_g(M)^2 & \text{if $\kappa\ge k$,} \\
\chi(N) \eta_g(M)^2 & \text{if $k\ge \kappa$.}
\end{cases}
\end{equation}
Here $\eta_g(M)$ is the pseudo-eigenvalue of $g$ for the Atkin--Lehner operator $W_M$. The formula is consistent if $k=\kappa$ because $k$ is even which implies that $\chi(-1)=1$ in such case. References for the Fricke involution and related constructions include Chapter 6 of~\cite{book:Iwan:topic}, Li~\cite{Li79} and the Appendix A.1 in~\cite{KMV02}.

In particular $\epsilon(f\times g)$ depends only on $N$ and $g$. This enables us to perform the average over $f\in\Sprim$ in Theorem~\ref{th:intro}.
If $g$ is self-dual (equivalently if $\chi$ is real), then $\eta_g(M)=\pm 1$ is the product of the root numbers of $g$ at the primes dividing $M$; thus $\epsilon(f\times g)=\pm 1$ depends only on $N$, $\chi$ and $k,\kappa$ as noted in the introduction. 

The formulas for the gamma factor~\eqref{Lfg} and the epsilon factor~\eqref{efg} can be found in~\cite{KMV02}*{\S4} who quote~\cite{Li79}*{Th. 2.2}. However it is more satisfactory to verify the functional equation with the framework of automorphic representations, as may be found e.g. in Jacquet~\cite{book:JLII}. For the sake of completeness we provide some details on how to derive~\eqref{Lfg} and~\eqref{efg} in this way; we fix a (standard) additive character $\psi$ and proceed place by place. 

\subsubsection{Real place} The component of $f$ (resp. $g$) at infinity is the discrete series representation of weight $k$ (resp. $\kappa$). The central  character is $\Mun=sgn^k$ (resp. $sgn^\kappa$). Let $W_\BmR=\BmC^\times \cup j\BmC^\times$ be the Weil group.

Under the local Langlands correspondence the discrete series representation of weight $k$ corresponds to the two dimensional representation of $W_\BmR$ given by
\begin{equation} 
  z\mapsto \Mdede{(\frac{z}{\bar z})^{\frac{k-1}{2}}}
  {0}{0}{(\frac{z}{\bar z})^{\frac{1-k}{2}}}
 ,\quad
 j\mapsto \Mdede{0}{(-1)^{k}}{1}{0}.
\end{equation}
The gamma factor is $\Gamma_\BmC(s+\frac{k-1}{2})$ and the epsilon factor is $i^k$.

A small computation shows that the tensor product of the representation of weight $k$ and the representation of weight $\kappa$ decomposes as the direct sum of two representations of weight $k+\kappa-1$ and $\max(k-\kappa,\kappa-k)+1$, respectively. This implies the formula~\eqref{Lfg} for the gamma factor, while the epsilon factor at infinity is
\begin{equation} \label{epsiloninfty}
\epsilon_\infty(f\times g,\psi) = (-1)^{\max(k,\kappa)}
=
\begin{cases}
\chi(-1),& \text{if $\kappa \ge k$,}\\
1,& \text{if $k \ge \kappa$.}
\end{cases}
\end{equation}

\subsubsection{Primes dividing $N$} Let $p\mid N$. The component of $f$ (resp. $g$) at $p$ is the Steinberg representation (resp. an unramified principal series representation with central character $\chi_p$). Using standard formulas for the epsilon factors (tensor product with an unramified representation~\cite{cong:auto77:tate}*{(3.4.6)}), we obtain
\begin{equation} \label{epsilonN}
\epsilon_p(f\times g,\psi) = \chi_p(p) \epsilon_p(f)^2 = \chi(p).
\end{equation}

\subsubsection{Primes dividing $M$} Let $p\mid M$. The component of $f$ (resp. $g$) at $p$ is an unramified principal series representation with trivial central character (resp. a ramified principal series representation with central character $\chi_p$). Using standard formulas for the epsilon factors 
\begin{equation} 
\label{epsilonM}
\epsilon_p(f\times g,\psi)=  \epsilon_p(g,\psi)^2 = \eta_g(p)^2.
\end{equation}
Here we used the fact that $\epsilon_p(g,\psi)=\eta_g(p)$ (the pseudo-eigenvalue at a prime $p$ is the same as the local root number).

The epsilon factor at unramified places is equal to $1$. Multiplying the identities~\eqref{epsiloninfty},~\eqref{epsilonN} and~\eqref{epsilonM} we derive~\eqref{efg}.

\subsection{Approximate functional equation}\label{sec:pre:app} The method is standard to express or approximate values of $L$-functions inside the critical strip (and actually goes back to Riemann); we shall briefly set it up for the Rankin--Selberg $L$-functions, see~\cite{Harc02,Mich04} and~\cite{book:IK04}*{\S5.2} for details.

We first treat the central value $L(\mdemi,f\times g)$ and assume that $\chi$ is non-trivial. We fix a meromorphic function $G$ on $\BmC$ which satisfies the following
\begin{enumerate}[(i)]
\item $G$ is odd, $G(s)=-G(-s)$;
\item $G$ is holomorphic except at $s=0$ where it has a simple pole with residue $\MRes_{s=0} G(s)=1$;
\item $G$ is of moderate growth (polynomial) on vertical lines.
\end{enumerate}
Then we construct the smooth function
\begin{equation}\label{def:V}
V(y) = \int_{\MRe s=2}
y^{-s}\widehat V(s)
\frac{ds}{2i\pi},\quad y\in (0,\infty)
\end{equation}
where
\begin{equation} \label{def:hatV}
\widehat V(s)=
\frac{L_\infty(\mdemi+s,f\times g)}{L_\infty(\mdemi,f\times g)} L^{(N)}(1+2s,\chi) G(s).
\end{equation}
The approximate functional equation method shows that the special value $L(\mdemi,f\times g)$ is given by 
\begin{equation}\label{appfneq}
  \sum_n
  \frac{\lambda_f(n)\lambda_g(n)}{\sqrt{n}}
V\Bigl(
\frac{n}{NM}
\Bigr)
+
\epsilon(f\times g)
\sum_n
\frac{\overline{\lambda_f(n)}\overline{\lambda_g(n)}}{\sqrt{n}}
\widetilde{V}\Bigl(
\frac{n}{NM}
\Bigr)
\end{equation}
We have the following uniform estimates for the functions $V$ and $\widetilde V$. This follows by shifting the contour of integration in~\eqref{def:V}.
\begin{lemma}\label{lem:V} For every non-negative integer $\alpha\in \BmN$,
\begin{equation} 
V^{(\alpha)}(y)=
\begin{cases}
  L^{(N)}(1,\chi) \delta_{0,\alpha} + O_\alpha(y^{1/2-\alpha}), &\text {for $0<y\le 1$,}\\
O_{A,\alpha}(y^{-A}), &\text{for $1\le y$ and $A>0$.}
\end{cases}
\end{equation}
\end{lemma}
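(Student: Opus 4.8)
The plan is to read Lemma~\ref{lem:V} off the Mellin--Barnes representation~\eqref{def:V} by a standard contour shift. Observe first that, by~\eqref{Lfg}, $L_\infty(s,f\times g)=\Gamma(s+\tfrac{|k-\kappa|}{2})\Gamma(s+\tfrac{k+\kappa}{2}-1)$ depends only on the fixed weights $k,\kappa$, so $\widehat V$ depends only on $k,\kappa,N,\chi,G$ and not on the particular $f$; hence all implied constants below will be uniform over $f\in\Sprim$. On a vertical line $\Re s=\sigma$ (write $s=\sigma+it$) Stirling's formula gives that the ratio $L_\infty(\mdemi+s,f\times g)/L_\infty(\mdemi,f\times g)$ decays like $|t|^{O_\sigma(1)}e^{-\pi|t|}$ as $|t|\to\infty$, whereas $G(s)$ and $L^{(N)}(1+2s,\chi)$ grow at most polynomially on such a line --- the latter because $\chi$ is non-trivial, so $L(\cdot,\chi)$ is entire and polynomially bounded on vertical lines (including where $\Re(1+2s)\le 1$) by the convexity bound for Dirichlet $L$-functions. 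Consequently the integral in~\eqref{def:V} and all of its formal $y$-derivatives converge absolutely and locally uniformly in $y\in(0,\infty)$, which both justifies differentiating under the integral sign,
\[
V^{(\alpha)}(y)=\int_{\Re s=2}(-1)^\alpha\frac{\Gamma(s+\alpha)}{\Gamma(s)}\,y^{-s-\alpha}\,\widehat V(s)\,\frac{ds}{2i\pi},
\]
and allows shifting the contour within any bounded vertical strip (picking up residues at the poles crossed), since the horizontal segments at height $\pm iT$ contribute $o(1)$ as $T\to\infty$.

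For $y\ge 1$ I would push the contour to the right, to $\Re s=A$ for large $A$. In the half-plane $\Re s>0$ the only singularity of $\widehat V$ is the simple pole of $G$ at $s=0$: the $\Gamma$-factors $\Gamma(\mdemi+s+\tfrac{|k-\kappa|}{2})$ and $\Gamma(\mdemi+s+\tfrac{k+\kappa}{2}-1)$ have their poles at $\Re s\le-\tfrac12$ and $\Re s\le-\tfrac32$ respectively, precisely because $k,\kappa\ge 2$, and $L^{(N)}(1+2s,\chi)$ is entire. Thus no pole is crossed, and estimating $|y^{-s-\alpha}|\le y^{-A}$ on $\Re s=A$ (valid since $y\ge 1$ and $A+\alpha\ge A$) together with the absolute convergence of the remaining integral gives $V^{(\alpha)}(y)\ll_{A,\alpha}y^{-A}$.

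For $0<y\le 1$ I would push the contour to the left, to $\Re s=-\tfrac12-\delta$ for a fixed small $\delta\in(0,\tfrac12)$. Between the two lines the poles of the integrand are $s=0$ (the pole of $G$) and, only when $k=\kappa$, the pole of $\Gamma(\mdemi+s+\tfrac{|k-\kappa|}{2})$ at $s=-\tfrac12$; when $k\neq\kappa$ this factor has its pole at $\Re s\le-1$, and the second $\Gamma$-factor has its pole at $\Re s\le-\tfrac32$, so neither is reached. At $s=0$ the residue of $(-1)^\alpha\tfrac{\Gamma(s+\alpha)}{\Gamma(s)}y^{-s-\alpha}\widehat V(s)$ equals $L^{(N)}(1,\chi)$ when $\alpha=0$ (since $\MRes_{s=0}G=1$ and $L_\infty(\mdemi+s,f\times g)/L_\infty(\mdemi,f\times g)\to 1$), while for $\alpha\ge 1$ the prefactor $\Gamma(s+\alpha)/\Gamma(s)=s(s+1)\cdots(s+\alpha-1)$ has a simple zero at $s=0$ that cancels the simple pole of $G$, so this residue vanishes --- which is exactly the origin of the $\delta_{0,\alpha}$. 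The residue at $s=-\tfrac12$, when present, is a constant (depending on $k,\kappa,\alpha,N,\chi,G$) times $y^{1/2-\alpha}$, hence $O_\alpha(y^{1/2-\alpha})$, and the shifted integral over $\Re s=-\tfrac12-\delta$ is $\ll_{\delta,\alpha}y^{1/2+\delta-\alpha}\le y^{1/2-\alpha}$ for $0<y\le 1$. Collecting the contributions yields $V^{(\alpha)}(y)=L^{(N)}(1,\chi)\,\delta_{0,\alpha}+O_\alpha(y^{1/2-\alpha})$.

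Finally, $\widetilde V$ is defined by an analogous integral whose Mellin transform is $\big(L_\infty(\mdemi+s,f\times\overline g)/L_\infty(\mdemi,f\times\overline g)\big)L^{(N)}(1+2s,\overline\chi)G(s)$; this has exactly the same analytic structure --- the archimedean parameters $|k-\kappa|$ and $k+\kappa$ are unchanged, the pole of $G$ is still at $s=0$, and $L^{(N)}(1+2s,\overline\chi)$ is again entire --- so the argument above applies verbatim and gives the same bounds for $\widetilde V^{(\alpha)}$. The computation is essentially routine residue calculus; the only genuine input is the exponential decay of the ratio of $\Gamma$-factors on vertical lines (Stirling) together with polynomial control of $L(\cdot,\chi)$ just past the edge of its critical strip, and the one point requiring a little care is pushing the contour far enough left --- to $\Re s=-\tfrac12-\delta$ --- to reach the exponent $\tfrac12-\alpha$ without running into the $\Gamma$-poles, which is where the hypothesis $k,\kappa\ge 2$ is used. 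I do not anticipate a serious obstacle.
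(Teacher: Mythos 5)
Your argument is correct and is precisely the contour-shift that the paper itself cites as the proof of Lemma~\ref{lem:V} (the paper's entire justification is the sentence ``This follows by shifting the contour of integration in~\eqref{def:V}''). You have simply written out the details --- locating the poles of $\widehat V$, computing the residues at $s=0$ and at $s=-\tfrac12$, and bounding the shifted integral --- so there is nothing further to flag.
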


In Theorem~\ref{th:intro} we are also concerned with higher derivatives and general critical values. The approximation follows in the same manner (see also~\cite{KMV-nonvanishing}). The functions $V$ and $\widehat V$ are similarly defined by the same equations~\eqref{def:V} and~\eqref{def:hatV}.
We now let the meromorphic function $G$ on $\BmC$ be such that
\begin{enumerate}[(i)]
\item $G$ is holomorphic except at $s=it$, where we have a pole of order $j+1$ and
\begin{equation*} 
  \mathrm{Res}_{s=it} \widehat V(s) G(s) \sum_{n\ge 1} \frac{\lambda_f(n)\lambda_g(n)}{n^s}= L^{(j)}(\tfrac12+it,f\times g);
\end{equation*}
\item $G$ is of moderate growth (polynomial) on vertical lines.
\end{enumerate}
It may be verified that such a function always exists and may be chosen independently of $f\in \Sprim$ (indeed it only depends on $f$ through the gamma factors $L_\infty(s,f\times g)$ which are given in terms of $k$ and $\kappa$).

The approximate functional equation method shows that $L^{(j)}(\mdemi+it,f\times g)$ is again given by the same expression~\eqref{appfneq}. Lemma~\ref{lem:V} holds true as well, except that the term $\delta_{0,\alpha}$ has to be replaced by the $\alpha$-derivative of some polynomial in $\log y$ of degree at most $j+1$.



\subsection{Averaging over a family of forms}\label{s:petersson}
Let $k\geqslant 2$ be an integer.  For any $c,m,n \in \mathbb{N}$ let $S(m,n;c)$ denote the Kloosterman sum
$$
S(m,n;c)=\sideset{}{^\ast}\sum_{\alpha(c)} e\left(\frac{m \alpha+n\overline{\alpha}}{c}\right).
$$ 
Let $N\geqslant 1$ be an integer and let $\mathcal{B}_k(N)$ be any Hecke eigenbasis for $\mathcal{S}_k(N)$. Let $\mathcal{S}^{\ast}_k(N)$ denote the collection of newforms in $\mathcal{B}_k(N)$.  For any $m,n\geqslant 1$, set
$$
\Delta_{k,N}(m,n):=\sum_{f\in \mathcal{B}_k(N)} \omega_f^{-1}\psi_f(m)\overline{\psi_f(n)}
$$
and
$$
\Delta^\ast_{k,N}(m,n):=\sum_{f\in \mathcal{S}^{\ast}_k(N)} \omega_f^{-1}\lambda_f(m)\lambda_f(n)
$$
where the spectral weights $\omega_f$ are given by
$$
\omega_f:= \frac{(4\pi)^{k-1}}{\Gamma(k-1)}\langle f, f \rangle_N.
$$
Note that the inner-product is taken at the same level $N$ on which we are averaging our family of forms. This convention differs slightly from~\cite{ILS00} in which the inner-product is always taken at the largest ambient level.  

We have the following standard tool for averaging Fourier coefficients over a Hecke eigenbasis.
\begin{lemma}[Petersson trace formula] We have
$$
\Delta_{k,N}(m,n)= \delta(m,n)+2\pi i^{-k} \sum_{\substack{c>0\\c\equiv 0 (N)}}\frac{1}{c} S(m,n;c) J_{k-1}\left(\frac{4\pi \sqrt{mn}}{c}\right)
$$
where $\delta(m,n)=1$ if $m=n$ and $\delta(m,n)=0$ otherwise.
\end{lemma}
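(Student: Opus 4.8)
The plan is to establish the identity by the classical method of Poincaré series, as in \cite{book:Iwan:topic}*{Ch.~3} or \cite{book:IK04}*{\S14}. For an integer $m\ge 1$ I introduce the weight-$k$ Poincaré series for $\Gamma_0(N)$,
$$
P_m(z):=\sum_{\gamma\in \Gamma_\infty\backslash\Gamma_0(N)} (c_\gamma z+d_\gamma)^{-k}\,e\big(m\,\gamma z\big),
\qquad \gamma=\mdede{a_\gamma}{b_\gamma}{c_\gamma}{d_\gamma},
$$
where $\Gamma_\infty$ is the stabiliser of the cusp $\infty$. For $k\ge 3$ this series converges absolutely and locally uniformly and defines an element of $\mathcal{S}_k(N)$; the weight-$2$ case is recovered through Hecke's analytic continuation trick. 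Two computations feed into the proof: (i) the Fourier expansion of $P_m$ at $\infty$, and (ii) the Petersson inner product of $P_m$ against an arbitrary cusp form.

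For (i) I split the coset sum according to whether $c_\gamma=0$. The unique coset with $c_\gamma=0$ contributes the term $e(mz)$, hence the diagonal $\delta(m,n)$ upon reading off Fourier coefficients. The cosets with $c_\gamma\ne 0$ are parametrised by a modulus $c>0$ with $N\mid c$ together with a residue $d\bmod c$ coprime to $c$; evaluating $\int_0^1 P_m(x+iy)e(-n(x+iy))\,dx$ termwise over these cosets, the sum over $d$ assembles into the Kloosterman sum $S(m,n;c)$, while the remaining real integral of the shape $\int_{\R}(x+iy)^{-k}e(-nx)\,e\!\big(-m/(c^2(x+iy))\big)\,dx$ is evaluated — by moving the contour off the real line and invoking the standard integral representation of the $J$-Bessel function — to produce the factor $2\pi i^{-k}(n/m)^{(k-1)/2}c^{-1}J_{k-1}(4\pi\sqrt{mn}/c)$. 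This yields
$$
P_m(z)=\sum_{n\ge 1}\Big(\delta(m,n)+2\pi i^{-k}\Big(\tfrac{n}{m}\Big)^{\!\frac{k-1}{2}}\sum_{\substack{c>0\\ N\mid c}}\frac{S(m,n;c)}{c}\,J_{k-1}\Big(\tfrac{4\pi\sqrt{mn}}{c}\Big)\Big)e(nz).
$$
For (ii) I unfold $\langle f,P_m\rangle_N$ against the coset sum: the integral collapses to a vertical strip and extracts the $m$-th classical Fourier coefficient of $f$, so writing $f(z)=\sum_n \psi_f(n)\,n^{(k-1)/2}e(nz)$ one obtains $\langle f,P_m\rangle_N=\dfrac{\Gamma(k-1)}{(4\pi m)^{k-1}}\,\psi_f(m)\,m^{(k-1)/2}$.

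To finish, note from (ii) that a cusp form orthogonal to every $P_m$ has all Fourier coefficients zero, hence is zero; thus $\{P_m\}_{m\ge 1}$ spans $\mathcal{S}_k(N)$, and expanding in the orthogonal Hecke eigenbasis gives $P_m=\sum_{f\in\mathcal{B}_k(N)}\langle P_m,f\rangle_N\,\langle f,f\rangle_N^{-1}f$. Reading off the $n$-th Fourier coefficient of this expansion, using $\langle P_m,f\rangle_N=\overline{\langle f,P_m\rangle_N}$ together with (ii) and the definition $\omega_f=(4\pi)^{k-1}\Gamma(k-1)^{-1}\langle f,f\rangle_N$, the powers of $m$ and $n$ combine to give
$$
\Big(\tfrac{n}{m}\Big)^{\!\frac{k-1}{2}}\sum_{f\in\mathcal{B}_k(N)}\omega_f^{-1}\,\overline{\psi_f(m)}\,\psi_f(n).
$$
Equating this with the $n$-th coefficient from (i) and cancelling the common factor $(n/m)^{(k-1)/2}$ gives the asserted identity, after the harmless relabelling $m\leftrightarrow n$ (legitimate since $S(m,n;c)=S(n,m;c)$). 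The step I expect to be the main obstacle is the evaluation of the oscillatory integral in (i) that produces the Bessel function — one must shift the contour of $\int_{\R}(x+iy)^{-k}e(\cdots)\,dx$ into the complex plane and recognise Bessel's integral — together with the justification of termwise unfolding, i.e. the absolute convergence of the coset sum, immediate for $k\ge 3$ and handled by the Hecke-trick regularisation when $k=2$.
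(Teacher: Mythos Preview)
Your argument is the classical derivation via Poincar\'e series and is correct; it is exactly the proof found in the references you cite (\cite{book:Iwan:topic}*{Ch.~3}, \cite{book:IK04}*{\S14}). The paper itself does not prove this lemma: it is simply quoted as a ``standard tool'' with no argument given, so there is nothing to compare beyond noting that your approach is the expected one.
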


For the purpose of our application, we wish to write down a summation formula when the average is restricted to the family of newforms $\Sprim$.  Let
\begin{equation}
v(N)=\left[ \Gamma_0(1) : \Gamma_0(N)\right] = N \prod_{p|N} (1+p^{-1}). \label{vN}
\end{equation}
One has the following renormalized version of a result of Iwaniec, Luo and Sarnak~\cite{ILS00}*{Proposition~2.8}. Under the assumptions that $N$ be square-free, $(m,N)=1$ and $(n,N^2)|N$,
\begin{equation}\label{ILS}
\Delta^\ast_{k,N}(m,n)= \sum_{LR=N}\frac{\mu(L)}{L v((n,L))}\sum_{\ell | L^{\infty}} \ell^{-1} \Delta_{k,R}(m \ell^2,n).
\end{equation}

The identity~\eqref{ILS} is not exactly sufficient for our purpose when we shall average over families $\Sprim$ with $N$ square-free in \S\ref{sec:sfree}. Indeed the condition $(n,N^2)\mid N$ is too restrictive unless $N$ is prime (\S\ref{sec:pf}). Therefore, we shall use the following variant in~\S\ref{sec:sfree}. As will be clear from the proof, this variant is already present in the work of Iwaniec, Luo and Sarnak as it is a particular case of~\cite{ILS00}*{Eq.~(2.51)}.
\begin{lemma}\label{l:ILS-modified}
 Suppose $N$ is square-free and $m,n$ are positive integers with $(m,N)=1$. Then
\begin{equation*} 
	\Delta_{k,N}^*(m,n)=
	\sum_{\substack{LR=N}} 
\frac{\mu(L)}{L \; v( (n,L))}
\sum_{\ell\mid L^\infty}\ell^{-1}
\sum_{\ell_1^2\mid (n,\ell_1L)}
\mu(\ell_1)\ell_1
\Delta_{k,R}\left(
m \ell^2,\frac{n}{\ell_1^2}
\right).
\end{equation*}
\end{lemma}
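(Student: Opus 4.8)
The statement to be proved is Lemma~\ref{l:ILS-modified}, the modification of the Iwaniec--Luo--Sarnak formula~\eqref{ILS} that removes the restrictive hypothesis $(n,N^2)\mid N$. The plan is to reduce to the case already covered by~\eqref{ILS} by a multiplicative dissection of the variable $n$. Write $N = LR$ with $L$ the "new part" and $R$ the "old part", as in~\eqref{ILS}. The obstruction in~\eqref{ILS} is that one needs $(n,L^2)\mid L$, i.e.\ $L$ divides $n$ at most once. So first I would factor out from $n$ exactly the square part supported on primes of $L$: write $n = \ell_1^2 n'$ with $\ell_1^2 \mid L^\infty$ (in fact $\ell_1 \mid L$ suffices after the dissection since $L$ is square-free, which is why the condition $\ell_1^2 \mid (n,\ell_1 L)$ appears — it forces $\ell_1\mid L$ and $\ell_1^2\mid n$) and $(n', L) $ now square-free in the primes of $L$. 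The Hecke relations at primes $p\mid L$ (with $(m,N)=1$) let one express $\psi_f(n)$ in terms of $\psi_f(n/\ell_1^2)$ and a divisor sum, producing the Möbius factor $\mu(\ell_1)\ell_1$ and reducing the $n$-argument to $n/\ell_1^2$, which does satisfy the needed coprimality for the level-$R$ Petersson formula.

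**Key steps in order.** (1) Start from the general sieving identity~\cite{ILS00}*{Eq.~(2.51)}, which expresses $\Delta^*_{k,N}(m,n)$ as a sum over $LR=N$ of renormalized $\Delta_{k,R}$-terms twisted by the arithmetic of $n$ at primes of $L$; this is the master formula from which~\eqref{ILS} is the special case. (2) Unfold the arithmetic factor at each prime $p\mid L$: using $\lambda_f(p)^2 = \lambda_f(p^2) + 1$ and more generally the Hecke recursion $\lambda_f(p^{a+1}) = \lambda_f(p)\lambda_f(p^a) - \lambda_f(p^{a-1})$ (valid since $p\nmid N$ is impossible here, but $p\mid N$ square-free forces $\lambda_f(p^a) = \lambda_f(p)^a$ for a newform of square-free level — one must be a little careful and use the correct newform Hecke relation at $p\mid R$ versus the fact that for $p\mid L$ the form has been pushed down), peel off the square part $\ell_1^2$. (3) Collect the resulting coefficients: the sign $\mu(\ell_1)$ and the weight $\ell_1$ emerge from telescoping the recursion, exactly as in the derivation of Hecke's own inversion $\sum_{d^2\mid n}\mu(d)\lambda(n/d^2)$-type identities. (4) Check the support condition: the dissection is nonempty only when $\ell_1^2 \mid n$ and $\ell_1 \mid L$, i.e.\ $\ell_1^2 \mid (n,\ell_1 L)$, matching the stated sum. (5) Reassemble and compare normalizations — the factor $v((n,L))$ must be replaced by $v((n,L))$ with $n$ the original variable (not $n/\ell_1^2$); verifying that this is what~(2.51) produces is the bookkeeping crux.

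**Main obstacle.** The genuine difficulty is not conceptual but bookkeeping: correctly tracking how the local factors at primes $p\mid L$ interact with the renormalization weights $v((n,L))$ and the inner sum $\sum_{\ell\mid L^\infty}\ell^{-1}$ already present in~\eqref{ILS}. One must make sure that peeling off $\ell_1^2$ does not disturb the $\ell$-sum (it does not, because $\ell$ runs over the level-lowering parameter while $\ell_1$ handles the square divisibility of $n$, and these act at the same primes but in independent ways) and that the Möbius/weight factor $\mu(\ell_1)\ell_1$ is exactly right rather than, say, $\mu(\ell_1)\ell_1^{1/2}$ — the normalization of $\psi_f$ versus $\lambda_f$ and of $\omega_f$ at level $R$ versus level $N$ is where sign or power errors creep in. The cleanest route, and the one I would take, is to not rederive anything from scratch but to quote~\cite{ILS00}*{Eq.~(2.51)} verbatim and simply specialize it, remarking (as the text already anticipates) that the variant "is already present in the work of Iwaniec, Luo and Sarnak"; the proof is then a half-page identification of terms rather than a new computation.
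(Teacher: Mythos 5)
Your high-level strategy matches the paper's proof exactly: quote ILS~\cite{ILS00}*{Eq.~(2.51)}, plug in the definitions of the objects appearing there, and do the bookkeeping. The paper indeed presents exactly this half-page identification. However, two points in your write-up need correction before the proof would go through.

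First, you misdescribe ILS~(2.51). That identity expresses the \emph{unstarred} average $\Delta_{k,N}(m,n)$ --- the sum over the full Hecke basis $\mathcal{B}_k(N)$ --- as a sum over $LR=N$ of contributions from newforms $f\in\mathcal{S}_k^*(R)$, not the other way around. After substituting the definitions of $Z(1,f)$, $Z_N(1,f)$ and $A_f(\cdot,L)$ and simplifying the normalizations (using $\langle f,f\rangle_N = v(L)\langle f,f\rangle_R$ and the Hecke relation $\lambda_f(m)\lambda_f(\ell^2)=\lambda_f(m\ell^2)$ for $(m,\ell)=1$, plus the special relation~\eqref{SpecialHecke} to handle the $R^\infty$-part of $Z_N$), one lands on a formula of the shape $\Delta_{k,N}(m,n)=\sum_{LR=N}(\cdots)\,\Delta^*_{k,R}(m\ell^2,n/\ell_1^2)$ with newform averages on the right. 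The Lemma goes the \emph{other} way --- $\Delta^*_{k,N}$ on the left and unstarred $\Delta_{k,R}$ on the right --- so a final M\"obius inversion over divisors of $N$ is needed, and this step does not appear anywhere in your outline. As written, your plan stops one step short of the statement.

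Second, there is no need to ``telescope the Hecke recursion'' to produce the factor $\mu(\ell_1)\ell_1$: that M\"obius structure is already packaged in the ILS definition of $A_f(n,L)$, namely
$A_f(n,L)=v((n,L))^{-1}\sum_{\ell_1^2\mid (n,\ell_1 L)}\mu(\ell_1)\ell_1\,\lambda_f(n/\ell_1^2)$
from~\cite{ILS00}*{Eq.~(2.47)}. Reproving this from the $p$-adic Hecke recursion is possible but unnecessary; quoting~(2.47) directly (as the paper does) both shortens the argument and avoids the pitfalls you correctly flag around the $p\mid L$ versus $p\mid R$ distinction. Your worry about whether the weight should be $\ell_1$ or $\ell_1^{1/2}$ is also resolved the same way: the exponent is simply whatever~(2.47) dictates, and the normalization $\omega_f$ at level $R$ is what makes it come out cleanly.
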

\begin{remark}\label{ILSRemarks}

\begin{enumerate}
  \item Under the assumption $(n,N^2)\mid N$, only the term $\ell_1=1$ contributes. We recover the formula~\eqref{ILS}.  
  \item The level $R$ is coprime with $\ell$ and $\ell_1$. This is of great importance in subsequent estimates when applying the Petersson trace formula which yields Kloosterman sums $S(m \ell^2,\frac{n}{\ell_1^2};c)$ with $c\equiv 0(R)$. The quality of obtained bounds diminishes as $(R,\ell)$ or $(R,\ell_1)$ increases. 
  \item For those $L>1$, one may truncate the $\ell$-sum to $\ell \leqslant L^A$ for any $A>0$ up to an error of size $O_\varepsilon \left((nm)^{\varepsilon}L^{-A}\right)$. \label{elltruncation}
\end{enumerate}
\end{remark}

\begin{proof} 
We begin with the formula in~\cite{ILS00}*{Eq.~(2.51)}
\begin{equation}
 \Delta_{k,N}(m,n)= \frac{12}{(k-1) N}
\sum_{LR=N}
\sum_{f\in \mathcal{S}^*_k(R)}
A_f(m,L)A_f(n,L)\frac{Z_N(1,f)}{Z(1,f)} \label{ILSstart}
\end{equation}
valid for $N$ square-free with $(n,m,N)=1$ where 
\begin{equation}
  Z(1,f)^{-1}=(4\pi)^{1-k} \Gamma(k) \frac{v(N)\phi(R)}{12 R} \langle f,f\rangle^{-1}_N \label{Z1f}
\end{equation}
as in \cite{ILS00}*{Lemma~2.5}, 
\begin{equation}
  Z_N(1,f):=\sum_{\ell|N^{\infty}} \lambda_f(\ell^2) \ell^{-s} \label{ZN1f}
\end{equation}
as in \cite{ILS00}*{Eq.~2.49} and
\begin{equation}
A_f(n,L):=\frac{1}{v( (n,L))}\sum_{\ell_1^2\mid (n,\ell_1L)}^{}
\mu(\ell_1)\ell_1\lambda_f\left( \frac{n}{\ell_1^2} \right) \label{AfnL}
\end{equation}
as in \cite{ILS00}*{Eq.~2.47}. 

 We observe by \eqref{Z1f} and \eqref{vN} that $ \langle f,f\rangle_N=v(L) \langle f,f\rangle_R$. Through our convention $\omega_f=\dfrac{(4\pi)^{k-1}}{\Gamma(k-1)}\langle f,f\rangle_R$ when $f \in \mathcal{S}_k^*(R)$, one can write \eqref{ILSstart} as
$$
 \Delta_{k,N}(m,n)= 
\sum_{LR=N} \frac{\phi(R) v(N)}{v(L) R N}
\sum_{f\in \mathcal{S}^*_k(R)}
\omega_f^{-1} A_f(m,L)A_f(n,L)Z_N(1,f)
$$
which is the same as
$$
 \Delta_{k,N}(m,n)= 
\sum_{LR=N} \frac{1}{L} \prod_{p|R} \left(1-\frac{1}{p^2}\right) 
\sum_{f\in \mathcal{S}^*_k(R)}
\omega_f^{-1} A_f(m,L)A_f(n,L)Z_N(1,f).
$$
Now since $(m,N)=1$, one may use \eqref{AfnL},  \eqref{ZN1f} and \eqref{SpecialHecke} to write this as
$$
 \Delta_{k,N}(m,n)= 
\sum_{LR=N} \frac{1}{L\;v((n,L))} \sum_{\ell | L^{\infty}} \frac{1}{\ell}\sum_{\ell_1^2\mid (n,\ell_1L)}^{}
\mu(\ell_1)\ell_1
\sum_{f\in \mathcal{S}^*_k(R)}
\omega_f^{-1} \lambda_f(m \ell^2) \lambda_f\left(\frac{n}{\ell_1^2}\right)
$$
and the Lemma follows by an application of M\"{o}bius inversion.

\end{proof}

\subsection{A bound on smooth numbers}
In the proof for square-free $N$ in \S\ref{sec:sfree} we shall need the following.
\begin{lemma}\label{l:smooth}
  Fix a positive real $A>0$. For any square-free integer $L\ge 1$, the number of integers $\ell\le L^A$ with $\ell \mid L^\infty$ is $ L^{o(1)}$.
\end{lemma}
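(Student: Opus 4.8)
The plan is to turn the count into a simple lattice‑point estimate. Write $r=\omega(L)$ and let $p_1<\dots<p_r$ be the prime divisors of $L$. An integer $\ell$ with $\ell\mid L^\infty$ and $\ell\le L^A$ corresponds bijectively to a tuple $(a_1,\dots,a_r)\in\BmZ_{\ge 0}^r$ with $\prod_i p_i^{a_i}\le L^A$; since each $p_i\ge 2$ this forces $a_1+\dots+a_r\le m:=\lfloor A\log L/\log 2\rfloor$. Hence the number $N$ we must bound is at most the number of tuples in $\BmZ_{\ge 0}^r$ with coordinate sum at most $m$, i.e.
\begin{equation*}
 N\le\binom{m+r}{r}\le\frac{(m+r)^r}{r!}\le\left(\frac{e(m+r)}{r}\right)^{\!r},
\end{equation*}
using $r!\ge (r/e)^r$. (One could instead compare with the volume of the simplex $\{t\in\BmR_{\ge 0}^r:\sum_i t_i\log p_i\le (A+1)\log L\}$, giving the slightly sharper $((A+1)\log L)^r/(r!\prod_i\log p_i)$; either is enough.) Taking logarithms, $\log N\le\psi(r)$ with $\psi(x):=x\bigl(1+\log(1+m/x)\bigr)$.

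The remaining task is to verify $\psi(r)=o(\log L)$ as $L\to\infty$. Two elementary inputs suffice. First, $\psi$ is increasing on $(0,\infty)$, since $\psi'(x)=\log(1+m/x)+\frac{x}{x+m}>0$. Second, $\omega(L)\ll \log L/\log\log L$; this is classical and follows at once from $L\ge p_1\cdots p_r\ge (r+1)!$ (the $i$‑th prime is $\ge i+1$), whence $\log L\ge\log r!\ge r\log r-r$ and therefore $r=\omega(L)\le 4\log L/\log\log L$ once $L$ is large. Since also $m\asymp\log L$, combining these gives, for $L$ large,
\begin{equation*}
 \log N\le\psi(\omega(L))\le\psi\!\left(\frac{4\log L}{\log\log L}\right)=\frac{4\log L}{\log\log L}\left(1+\log\!\left(1+\frac{m\log\log L}{4\log L}\right)\right)\ll\frac{\log L\,\log\log\log L}{\log\log L}=o(\log L),
\end{equation*}
so that $N=\exp(o(\log L))=L^{o(1)}$.

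I expect the only real subtlety to be this last cancellation, and it is worth flagging why cruder bounds fail. When $L$ is (a multiple of) a primorial, $\omega(L)$ is as large as $\log L/\log\log L$; then the binomial estimate is genuinely of size roughly $\exp\bigl(r\log(m+r)-r\log r\bigr)$ with $r\log(m+r)\asymp\log L$, and it is precisely the $-r\log r$ coming from the factor $r!$ that brings this down to the admissible $\exp\bigl(O(\tfrac{\log L\log\log\log L}{\log\log L})\bigr)$. Dropping $r!$ from the binomial coefficient, or bounding $\binom{m+r}{r}$ by $(m+1)^r$, would only give $N=L^{O(1)}$, which is useless here.
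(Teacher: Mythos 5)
Your argument is correct, but it takes a genuinely different route than the paper. The paper uses Rankin's trick: for $\sigma>0$, bound the count by
\begin{equation*}
\sum_{\ell\mid L^\infty}\Bigl(\frac{L^A}{\ell}\Bigr)^{\sigma}
= L^{A\sigma}\prod_{p\mid L}\bigl(1-p^{-\sigma}\bigr)^{-1}
\le L^{A\sigma}\bigl(1-2^{-\sigma}\bigr)^{-\omega(L)},
\end{equation*}
then uses $\omega(L)=o(\log L)$ to absorb the last factor into $L^{\varepsilon}$, and finishes by letting $\sigma\to 0$. Your proof instead counts exponent vectors directly: you parametrize $\ell\mid L^{\infty}$, $\ell\le L^A$, by nonnegative tuples $(a_1,\dots,a_r)\in\BmZ_{\ge 0}^r$ with $a_1+\dots+a_r\le m\asymp\log L$, bound the count by $\binom{m+r}{r}\le\bigl(e(m+r)/r\bigr)^r$, and then feed in $r=\omega(L)\ll\log L/\log\log L$ via the monotonicity of $\psi(x)=x(1+\log(1+m/x))$ to get $\log N\ll\frac{\log L\,\log\log\log L}{\log\log L}=o(\log L)$. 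Both proofs hinge on the same elementary input (that $\omega(L)$ grows slower than $\log L$, forced by $L\ge\prod_{i\le r}p_i$), but your version exposes explicitly where the saving comes from — the $r!$ in the binomial coefficient — while Rankin's method packages the same fact into a choice of $\sigma$. Your approach is more elementary and self-contained; the paper's is shorter and the standard idiom in analytic number theory. Your proof is complete and correct: the identity $\psi'(x)=\log(1+m/x)+\frac{x}{x+m}>0$ is right, the crude bound $\omega(L)\le 4\log L/\log\log L$ for large $L$ is justified by $\log L\ge\log r!\ge r\log r-r$, and the final estimate does give $o(\log L)$ because $\log\log\log L=o(\log\log L)$.
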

\begin{proof}
  This is an elementary adaptation of the Rankin method. 
More precise estimates and asymptotics are discussed in~\cite{book:MV-mult}*{Chap.~7}.
\footnote{In the worst case situation where $L$ is the product of all primes $p\le y$ this a bound for $\Psi(x,y)$ with $y\asymp \log x$ and $x=L^A$ which may be shown to be at most $\exp(c\frac{\log x}{ \log \log x})=L^{o(1)}$.}  
  Let $\sigma>0$; the number of integers we are estimating is
\begin{equation*}
  \begin{aligned}  
	\#\{\ell\le L^A,\ \ell\mid L^\infty\}	\le&
	\sum_{\ell \mid L^\infty} \left(\frac{L^A}{\ell}\right)^{\sigma}
	\le L^{A\sigma} \prod_{\ell \mid L} \left( 1-\frac{1}{\ell^{\sigma}} \right)^{-1}\\
	\ll & L^{A\sigma} (1-2^{-\sigma})^{-\omega(L)} \ll_{\epsilon,\sigma} L^{A\sigma+\epsilon}.
  \end{aligned}
\end{equation*}
Choosing $\sigma>0$ arbitrarily small concludes the claim.
\end{proof}

\subsection{Averaging over Hecke eigenvalues with additive twists}
The following is established in~\cite{KMV02}*{Appendix~A}.
\begin{lemma}[Vorono\"{i} summation]
  \label{lem:voronoi}
  (i) Let $(a,c)=1$ and let $h$ be a smooth function, compactly supported in $(0,\infty)$. Let $g\in S_\kappa^*(M,\chi)$ be a holomorphic newform of level $M$. Set $M_2:=M/(M,c)$. Then there exists a complex number $\eta$ of modulus $1$ (depending on $a,c$ and $g$) and a newform $g^\ast \in S_\kappa^*(M,\chi^*)$ of the same level $M$ and the same archimedean parameter $\kappa$ such that
$$
\sum_n \lambda_g(n) e\bigg(n\frac{a}{c}\bigg)h(n) =  \frac{2 \pi\eta}{c\sqrt{M_2}}\sum_n \lambda_{g^{\ast}}(n) e\bigg(-n\frac{\overline{aM_2}}{c}\bigg)\int_0^{\infty} h(\xi) J_{\kappa-1}\bigg(\frac{4\pi\sqrt{n\xi}}{c\sqrt{M_2}}\bigg) d\xi
$$
where $\overline{x}$ denotes the multiplicative inverse of $x$ modulo $c$.

(ii) Decomposing $\chi=\chi_1 \chi_2$ into characters of modulus $M_1$ and $M_2$ respectively, where $M=M_1M_2$, we have that $\chi^*=\chi_1 \overline{\chi_2}$.

(iii) Let $\eta_g(M_2)$ be the pseudo-eigenvalue of $g$ for the $M_2$ Atkin--Lehner operator. Then
\begin{equation} 
  \eta = i^{\kappa} \overline{\chi_1(a)}
  \chi_2(-c) \eta_g(M_2).
\end{equation}

(iv) The Hecke eigenvalues of $g^*$ are given by
\begin{equation} 
  \lambda_{g^*}(n)=
  \begin{cases}
	\overline{\chi_2(n)} \lambda_g(n), & \text{if $(n,M_2)=1$,}\\
	\chi_1(n) \overline{\lambda_g(n)}, & \text{if $n\mid M_2^{\infty}$.}
  \end{cases}
\end{equation}
\end{lemma}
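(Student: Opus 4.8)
The plan is to run the classical argument, as in \cite{KMV02}*{Appendix~A}: convert the sum into a contour integral of the additively twisted Dirichlet series attached to $g$, prove that this series is entire and satisfies a functional equation coming from the automorphy of $g$, and then invert. Throughout we use that $M$ is square-free, so that $(M_2,c)=1$ and $\overline{aM_2}$ is well defined modulo $c$.

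\medskip\noindent\textbf{Step 1 (reduction to a functional equation).} First I would pass to Mellin transforms. Writing $h(x)=\frac{1}{2\pi i}\int_{(\sigma)}\widehat h(w)x^{-w}\,dw$ with $\widehat h(w)=\int_0^\infty h(x)x^{w-1}\,dx$ of rapid decay on vertical lines and $\sigma$ large, the left side of (i) becomes $\frac{1}{2\pi i}\int_{(\sigma)}\widehat h(w)\,L_g(w,a/c)\,dw$, where
$$
L_g(w,a/c):=\sum_{n\ge 1}\frac{\lambda_g(n)\,e(na/c)}{n^{w}}
$$
converges absolutely for $\Re w$ sufficiently large. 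On the other hand a standard evaluation of the Mellin transform of $J_{\kappa-1}$ rewrites $\int_0^\infty h(\xi)J_{\kappa-1}(4\pi\sqrt{Y\xi})\,d\xi$, up to elementary factors, as $\frac{1}{2\pi i}\int\widehat h(w)\,Y^{w-1}\,\Gamma(\tfrac{\kappa+1}{2}-w)\Gamma(\tfrac{\kappa-1}{2}+w)^{-1}\,dw$ for any $Y>0$. Comparing the two sides, the whole of (i) reduces to continuing $L_g(w,a/c)$ to an entire function and establishing the functional equation
$$
(2\pi)^{-w}\Gamma\!\left(w+\tfrac{\kappa-1}{2}\right)L_g(w,a/c)=\eta\,(c\sqrt{M_2})^{1-2w}\,(2\pi)^{-(1-w)}\Gamma\!\left(1-w+\tfrac{\kappa-1}{2}\right)L_{g^*}\!\left(1-w,-\overline{aM_2}/c\right)
$$
for a unit $\eta$ and a newform $g^*$ of level $M$ and weight $\kappa$, both to be identified. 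Granting this, one shifts the contour from $\Re w=\sigma$ to $\Re w=1-\sigma$ (valid since $g$ cuspidal forces $L_g$ entire and, by the functional equation together with Phragm\'en--Lindel\"of, polynomially bounded on vertical lines), inserts the functional equation, expands $L_{g^*}$ into its Dirichlet series, exchanges sum and integral, and recognizes the right side of (i), the factor $2\pi\eta/(c\sqrt{M_2})$ dropping out of the residual $\Gamma$-quotient.

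\medskip\noindent\textbf{Step 2 (the functional equation from automorphy).} This is the heart. Writing $g(z)=\sum_n\lambda_g(n)n^{(\kappa-1)/2}e(nz)$, the completed object $(2\pi)^{-w}\Gamma(w+\tfrac{\kappa-1}{2})L_g(w,a/c)$ is, up to a harmless constant, the Mellin transform $\int_0^\infty g(\tfrac ac+iy)\,y^{w+(\kappa-1)/2}\tfrac{dy}{y}$, which converges for all $w$ because $g$ decays exponentially as $y\to\infty$ and, $g$ being a cusp form (hence vanishing at the cusp $a/c$), also as $y\to 0$; this already gives the continuation. For the functional equation I would pick $\sigma_{a/c}=\left(\begin{smallmatrix}a&b\\ c&d\end{smallmatrix}\right)\in\mathrm{SL}_2(\mathbb{Z})$, note that $\sigma_{a/c}^{-1}(\tfrac ac+iy)=\tfrac{i}{c^2y}-\tfrac dc$, and use that, since $M$ is square-free, the width of the cusp $a/c$ for $\Gamma_0(M)$ is $M_2=M/(M,c)$, so $g|_\kappa\sigma_{a/c}$ has a Fourier expansion in $e(z/M_2)$. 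The key input is that this expansion is governed by the Atkin--Lehner operator $W_{M_2}$: there is an identity of the shape $(g|_\kappa\sigma_{a/c})(z)=\eta_0\,g^*\!\left(\tfrac z{M_2}+(\text{rational shift})\right)$, with $g^*$ a newform of level $M$, $\eta_0$ an explicit constant, and the rational shift exactly what manufactures the additive character $e(-\overline{aM_2}\,\cdot\,/c)$ after substitution. Splitting $\int_0^\infty(\cdots)\tfrac{dy}{y}$ at $y=1/(c\sqrt{M_2})$ and applying this transformation to the range near $0$ then yields the functional equation, the dilation factor $(c\sqrt{M_2})^{1-2w}$ coming from the change of variables $y\mapsto 1/(c^2M_2y)$ and $\eta$ collecting the accumulated constants.

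\medskip\noindent\textbf{Step 3 (identifying $g^*$, $\eta$ and the eigenvalues).} Parts (ii)--(iv) are the newform bookkeeping of Atkin--Lehner--Li. Decomposing $\chi=\chi_1\chi_2$ into characters modulo $M_1=(M,c)$ and $M_2$, the operator $W_{M_2}$ conjugates the nebentypus in the $M_2$-part, so $g^*\in S_\kappa^*(M,\chi_1\overline{\chi_2})$, which is (ii). In (iii), $\eta$ is the product of the automorphy factors picked up in Step~2: the $i^{\kappa}$ from the weight-$\kappa$ inversion, the value $\overline{\chi_1(a)}\chi_2(-c)$ of $\chi$ along $\sigma_{a/c}$, and the pseudo-eigenvalue $\eta_g(M_2)$ in $g|_\kappa W_{M_2}=\eta_g(M_2)g^*$. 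Finally, for (iv): $g^*$ and $g$ share the same local components away from $M_2$, giving $\lambda_{g^*}(n)=\overline{\chi_2(n)}\lambda_g(n)$ for $(n,M_2)=1$; at a prime $p\mid M_2$ the local component of $g^*$ is the Atkin--Lehner conjugate of that of $g$, which forces $\lambda_{g^*}(p)=\chi_1(p)\overline{\lambda_g(p)}$, and since $M$ is square-free one has $\lambda_{g^*}(p^e)=\lambda_{g^*}(p)^e$, so multiplicativity gives $\lambda_{g^*}(n)=\chi_1(n)\overline{\lambda_g(n)}$ for all $n\mid M_2^\infty$.

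\medskip\noindent The hard part will be Step~2: one must choose the auxiliary entries $b,d$ of $\sigma_{a/c}$ and carry out an honest computation with level-$M_2$ Atkin--Lehner matrices so as to verify simultaneously that the transformation intertwines $g$ with $\eta_0\,g^*$ and that it implements the passage from $a/c$ to $-\overline{aM_2}/c$ with the correct scaling; the interaction of $M_2$ with $c$ when $M\nmid c$ is what makes this delicate, and accounting for every constant entering $\eta$ demands care. Everything else---the two Mellin inversions, the Bessel identity, absolute convergence, and the contour shift---is routine. As an alternative to Step~2 one could open $e(na/c)$ into Dirichlet characters modulo $c$ via Gauss sums and invoke the functional equations of the twisted $L$-functions $L(s,g\otimes\psi)$; this is formally equivalent but displaces the difficulty into bookkeeping of conductors and local epsilon factors.
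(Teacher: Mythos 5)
The paper does not prove this lemma itself---it is quoted directly from \cite{KMV02}*{Appendix~A}---and your outline (Mellin inversion reducing (i) to the functional equation of the additively twisted series $L_g(w,a/c)$, which is then obtained from the Fourier expansion of $g$ at the cusp $a/c$ of width $M_2$ via Atkin--Lehner--Li theory for square-free level, with (ii)--(iv) as the attendant bookkeeping) is essentially the argument of that appendix. The part you defer in Step~2, namely choosing $\sigma_{a/c}$ and doing the honest Atkin--Lehner matrix computation that pins down $\eta$, the shift $-\overline{aM_2}/c$, and the eigenvalue relations, is precisely what \cite{KMV02} carries out, so your route coincides with the cited proof rather than offering an alternative.
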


\section{Proof of subconvexity when $N$ is prime}\label{sec:pf} 
Let $1<N<M$ with $N$ prime and $M$ square-free such that $(N,M)=1$. Let $f\in \Sprim$ and let $g\in \SpM$ with $g$ self-dual. As discussed in \S\ref{sec:intro:positivity}, this implies that $\chi$ is quadratic and that the root number is $\epsilon(f \times g)= \pm 1$.   
In the case that $\epsilon(f \times g)= 1$, the approximate functional equation argument reduces our $L$-function to the analysis of the sum:
$$
L(\tdemi,f\times g)=
2\sum_n
\frac{\lambda_f(n)\lambda_g(n)}{\sqrt{n}}
V\Bigl(
\frac{n}{NM}
\Bigr) 
$$
where $V$ satisfies the properties of \S \ref{sec:pre:app}.  Recall further that such an object is known to be non-negative when $f \times g$ is self-dual symplectic and therefore we assume in the end that $g$ is dihedral in order to establish subconvexity as a corollary to our first moment bound.  To set up for our application of~\eqref{ILS}, we trivially write the above as
\begin{equation} \label{firstprime}
2\sum_{(n,N^2)|N}
\frac{\lambda_f(n)\lambda_g(n)}{\sqrt{n}}
V\Bigl(
\frac{n}{NM}
\Bigr) +O\left(\frac{\sqrt{M}}{N}\right)
\end{equation}
for any $f\in \Sprim$.  Therefore, a first moment average over central $L$-values reduces to the study of
$$
S=\sum_{f\in \Sprim} \omega_f^{-1} \sum_{(n,N^2)|N} \frac{\lambda_f(n)\lambda_g(n)}{\sqrt{n}}
V\Bigl(
\frac{n}{NM}
\Bigr)
$$
up to an error of size $\ll \sqrt{M} N^{-1}$.
\subsection{From newforms to full bases of Hecke eigenforms}
We start by changing the order of summation in $S$ above
$$
S=\sum_{(n,N^2)|N} \frac{\lambda_g(n)}{\sqrt{n}}
V\Bigl(
\frac{n}{NM}
\Bigr)\Delta^\ast_{k,N}(1,n).
$$
Using the fact that $N$ is prime, we convert the sum over newforms by~\eqref{ILS} to sums over Hecke eigenbases for $\mathcal{S}_k(N)$ and $\mathcal{S}_k(1)$  
$$
 \sum_{(n,N^2)|N} \frac{\lambda_g(n)}{\sqrt{n}}
V\Bigl(
\frac{n}{NM}
\Bigr)\left\{\Delta_{k,N}(1,n)-\frac{1}{Nv((n,N))}\sum_{j\geqslant 0} N^{-j} \Delta_{k,1}(N^{2j},n)\right\}.
$$
By trivial estimates, one sees that the terms
$$
 \sum_{n\equiv 0(N)} \frac{\lambda_g(n)}{\sqrt{n}}
V\Bigl(
\frac{n}{NM}
\Bigr)\frac{1}{Nv((n,N))}\sum_{j\geqslant 0} N^{-j} \Delta_{k,1}(N^{2j},n)
$$
and
$$
 \sum_{(n,N)=1} \frac{\lambda_g(n)}{\sqrt{n}}
V\Bigl(
\frac{n}{NM}
\Bigr)\frac{1}{Nv((n,N))}\sum_{j\geqslant 1} N^{-j} \Delta_{k,1}(N^{2j},n)
$$
are both of size $\ll \sqrt{M} N^{-1}$.  Furthermore, the sums
$$
 \sum_{n\equiv 0(N)} \frac{\lambda_g(n)}{\sqrt{n}}
V\Bigl(
\frac{n}{NM}
\Bigr)\frac{1}{N}\Delta_{k,1}(1,n)
$$
and
$$
 \sum_{n\equiv 0(N^2)} \frac{\lambda_g(n)}{\sqrt{n}}
V\Bigl(
\frac{n}{NM}
\Bigr)\Delta_{k,N}(1,n)
$$
satisfy the same bound and may be added back to $S$. Therefore, we see that
\begin{equation}
S=\sum_{n} \frac{\lambda_g(n)}{\sqrt{n}}
V\Bigl(
\frac{n}{NM}
\Bigr)\left\{\Delta_{k,N}(1,n)-\frac{1}{N}\Delta_{k,1}(1,n)\right\} + O\left(\frac{\sqrt{M}}{N} (NM)^{\varepsilon}\right).\label{newminusold}
\end{equation}
One can simply view the above as rewriting the spectral average over newforms of level $N$ as a sum over all forms of level $N$ minus the contribution of the old forms. 
\subsection{Old form contribution}\label{sec:pf:old}
Since the weights of our forms are fixed, we can think of $\Delta_{k,1}(1,n)$ in \eqref{newminusold} as a fixed form of full level. One has 
$$
\frac{1}{N}\sum_{n} \frac{\lambda_g(n)}{\sqrt{n}}
V\Bigl(
\frac{n}{NM}
\Bigr)\Delta_{k,1}(1,n) = \frac{1}{N} \sum_{h\in \mathcal{B}_k(1)} \omega_h^{-1}\sum_{n} \frac{\lambda_g(n)\psi_h(n)}{\sqrt{n}}
V\Bigl(
\frac{n}{NM}
\Bigr)
$$
so that we must analyze
$$
\frac{1}{2\pi i}
 \int_{(2)}
\left(NM\right)^{s} \frac{\Gamma(s+\frac{1+\abs{k-\kappa}}{2})\Gamma(s+\frac{k+\kappa-1}{2})}{\Gamma(\frac{1+\abs{k-\kappa}}{2})\Gamma(\frac{k+\kappa-1}{2})}L(s+\tfrac{1}{2}, g \times h) L^{(N)}(1+2s,\chi) G(s)
ds
$$
where $G(s)$ satisfies the properties of \S \ref{sec:pre:app}.  Shifting the contour to the left, one picks up the contribution from the pole at $s=0$ and then applies the convexity bound for the Rankin-Selberg $L$-function $L(\tfrac12,g\times h)$ to obtain
$$
\frac{1}{N} \sum_{n} \frac{\lambda_g(n)}{\sqrt{n}}
V\Bigl(
\frac{n}{NM}
\Bigr)\Delta_{k,1}(1,n) \ll_{\varepsilon} \frac{M^{1/2+\varepsilon}}{N}.
$$

Therefore, the contribution from the old forms is absorbed into the error term and we have that
\begin{equation}
S=   \sum_{n} \frac{\lambda_g(n)}{\sqrt{n}}
V\Bigl(
\frac{n}{NM}
\Bigr)\Delta_{k,N}(1,n)+O_{\varepsilon}\left(\frac{\sqrt{M}}{N}(NM)^{\varepsilon}\right). \label{reduction}
\end{equation}

\begin{remark}
We have arrived at the following.
For $N$ prime we have
\begin{equation}
\sum_{n} \frac{\lambda_g(n)}{\sqrt{n}}
V\Bigl(
\frac{n}{NM}
\Bigr)\left\{\Delta^\ast_{k,N}(1,n) -\Delta_{k,N}(1,n)\right\}\ll_{\varepsilon}\frac{\sqrt{M}}{N}(NM)^{\varepsilon}\label{averagedifference}.
\end{equation}
\end{remark}
\subsection{The average over $\mathcal{S}_k(N)$}
By equation \eqref{reduction}, we are left with treating the full first moment average
$$
\sum_{n} \frac{\lambda_g(n)}{\sqrt{n}}
V\Bigl(
\frac{n}{NM}
\Bigr)\Delta_{k,N}(1,n)
$$
as we did in the outline \S\ref{s:sketch} with a few additional details.

Petersson's trace formula produces a diagonal term and off-diagonal terms of the form
$$
\sum_{n}\frac{\lambda_g(n)}{\sqrt{n}}V\Bigl(
\frac{n}{NM}
\Bigr)\left\{\delta(n,1)+2\pi i^{-k}\sum_{\substack{c>0\\c\equiv 0(N)}}\frac{1}{c}S(n,1;c) J_{k-1}\left(\frac{4\pi\sqrt{n}}{c}\right)\right\}
$$
where $\delta(1,1)=1$, $\delta(n,1)=0$ otherwise.   Therefore, the diagonal term contribution to our full first moment average is simply $V(1/NM)$.

We now turn our attention to the off-diagonal sums. 
We start by taking a smooth partition of unity for $V$ and consider sums over dyadic segments, of different sizes $N \leqslant Z\leqslant (NM)^{1+\varepsilon}$ say, controlled by some positive, smooth function $h$ supported in $[1/2,5/2]$. The standard Bessel function bounds in \S\ref{sec:pre:bessel}, along with an application of Weil's bound for Kloosterman sums, allows us to truncate our $c$-sum to one of length $c\leqslant Z^A$ up to an error term of size $Z^{-B}N^{-1}$ for some positive $A$ and $B$ (note that $k\ge 2$). For the remaining sum over $c$, we change the order of summation and break apart the $c$-sum relative to $(c,M)$ in order  to have 
\begin{equation}
\sum_{M_2|M}\sum_{\substack{N\leqslant c\leqslant Z^A \\c\equiv 0(N)\\ (c,M)=M/M_2}}\frac{1}{c}\sum_{n}\frac{\lambda_g(n)}{\sqrt{n}}S(n,1;c) J_{k-1}\left(\frac{4\pi\sqrt{n}}{c}\right)h\left(\frac{n}{Z}\right). \label{endoftruncations}
\end{equation}
For each fixed $c$ in the outer sums, an application of Lemma~\ref{lem:VoronoiApp} to the $n$-sum with $D=1$ and $q=c$ bounds the above by
\begin{eqnarray*}
& &\sum_{M_2|M}\sum_{\substack{N\leqslant c\leqslant Z^A \\c\equiv 0(N)\\ (c,M)=M/M_2}}\frac{1}{c} \left(\sqrt{\frac{Z}{M_2}}+ c\bigg(1+\frac{\sqrt{Z}}{c}\bigg)\sqrt{\frac{M_2}{Z}}\right) \frac{(\sqrt{Z}/c)^2}{(1+\sqrt{Z}/c)^3} (MZ)^{\varepsilon}\\
& \ll & \sum_{M_2|M}\left\{\sum_{\substack{N\leqslant c\leqslant \sqrt{Z}\\c\equiv 0(NM/M_2)}}\frac{1}{\sqrt{M_2}}\left(1+\frac{M_2}{\sqrt{Z}}\right)+\sum_{\substack{\sqrt{Z} < c\leqslant Z^A \\ c\equiv 0(NM/M_2)}}\frac{\sqrt{Z}}{c^2}\left(\frac{Z}{c\sqrt{M_2}}+\sqrt{M_2}\right)\right\}(MZ)^{\varepsilon}\\
& \ll & \left(\frac{\sqrt{Z}}{N\sqrt{M}}+\frac{\sqrt{M}}{N}\right)(MZ)^{\varepsilon} \ll \frac{\sqrt{M}}{N} (NM)^{\varepsilon}.
\end{eqnarray*}

Therefore, one establishes that 
 $$
\sum_{n} \frac{\lambda_g(n)}{\sqrt{n}}
V\Bigl(
\frac{n}{NM}
\Bigr)\Delta_{k,N}(1,n)= V\Bigl(\frac{1}{NM}\Bigr)+O_{\varepsilon}\left(\frac{\sqrt{M}}{N}(NM)^{\varepsilon}\right)
$$
after summing over all dyadic segments $Z$ in $n$. By \eqref{reduction} and \eqref{firstprime} we get that

\begin{equation}
  \label{pf-summary} 
\sum_{f\in \Sprim} \omega_f^{-1} \sum_{n}\frac{\lambda_f(n)\lambda_g(n)}{\sqrt{n}} V\left(\frac{n}{NM}\right)= V\Bigl(\frac{1}{NM}\Bigr)
+O_{\varepsilon}\left(\frac{\sqrt{M}}{N}(NM)^{\varepsilon}\right).
\end{equation}
Therefore, assuming that $g$ is also dihedral, such that positivity of all central $L$-values is known in the average over $f\in \Sprim$, one has by Lemma \ref{lem:V} the subconvexity bound
\begin{equation}
L( \tfrac{1}{2}, f\times g) \ll_\varepsilon \sqrt{NM}\left(\frac{\sqrt{N}}{\sqrt{M}}+\frac{1}{\sqrt{N}}\right)(NM)^{\varepsilon} \label{FinalSubconvex}
\end{equation}
for any individual $L$-function in our family.

\section{Proof of Theorem~\ref{th:intro}}\label{sec:sfree}
Let $1\leqslant N<M$ with $N$ and $M$ square-free such that $(N,M)=1$.  For $g\in \SpM$ we consider the first moment
$$
\sum_{f\in \Sprim} \omega_f^{-1} L^{(j)}(\tfrac{1}{2}+it,f\times g).
$$
Using the approximate functional equation in~\S\ref{sec:pre:app} we reduce the analysis of the average of central $L$-values to the analysis of 
\begin{equation} \label{S}
S=\sum_{f\in \Sprim} \omega_f^{-1} \sum_{n} \frac{\lambda_f(n)\lambda_g(n)}{\sqrt{n}}
V\Bigl(
\frac{n}{NM}
\Bigr) = \sum_{n} \frac{\lambda_g(n)}{\sqrt{n}}
V\Bigl(
\frac{n}{NM}
\Bigr)\Delta^\ast_{k,N}(1,n)
\end{equation}
where $V$ satisfies the properties of \S \ref{sec:pre:app}. In this reduction we make crucial use of the fact that the root number $\epsilon(f\times g)$ is independent of $f\in \Sprim$ (see~\S\ref{sec:pre:RS}).  


\subsection{Averaging over newforms}
The average $S$ in \eqref{S} over newforms of level $N$ can be written as follows using Lemma~\ref{l:ILS-modified}
\begin{equation*}
  \sum_{n}
\frac{\lambda_g(n)}{\sqrt{n}}
V\Bigl(
\frac{n}{NM}
\Bigr)
\sum_{LR=N}
\frac{\mu(L)}{L v( (n,L))}
\sum_{\ell \mid L^\infty}
\ell^{-1}
\sum_{\ell_1^2\mid (n,\ell_1L)}
\mu(\ell_1)\ell_1
\Delta_{k,R}(\ell^2,\frac{n}{\ell_1^2}).
\end{equation*}
Furthermore, by Remark \ref{ILSRemarks} (\ref{elltruncation}), one may restrict to considering only those $\ell |L^{\infty}$ with $\ell \leqslant L^A$ for some large $A>0$. We change the order of summation to obtain
\begin{equation*}
\sum_{LR=N}
\frac{\mu(L)}{L}
\sum_{\substack{\ell\mid L^\infty \\ \ell \leqslant L^A}}
\ell^{-1}
\sum_{\ell_1\mid L}
\mu(\ell_1)
\sum_{n}
\frac{\lambda_g(n\ell_1^2)}{v( (n\ell_1^2,L))\sqrt{n}}
V\Bigl(
\frac{n\ell_1^2}{NM}
\Bigr)
\Delta_{k,R}(\ell^2,n).  
\end{equation*}
Letting $L_1=\frac{L}{\ell_1}$, we note that $v( (n\ell_1^2,L))=v(\ell_1)v( (n,L_1))$. Thus it remains to focus on the inner sum
\begin{equation*}
S_{\textnormal{in}}=\sum_{n}
\frac{\lambda_g(n\ell_1^2)}{v( (n,L_1))\sqrt{n}}
V\Bigl(
\frac{n\ell_1^2}{NM}
\Bigr)
\Delta_{k,R}(\ell^2,n).    
\end{equation*}

\subsection{Averaging over all forms of level $R$}

We apply Petersson's trace formula. The diagonal contribution is then given by
$$
\frac{\lambda_g(\ell^2\ell_1^2)}{v( (\ell^2,L_1))\ell}
V\Bigl(
\frac{\ell^2 \ell_1^2}{NM}
\Bigr)  
$$
so that the total diagonal contribution to $S$ is
\begin{equation}
\sum_{LR=N}
\frac{\mu(L)}{L}
\sum_{\substack{\ell\mid L^\infty \\ \ell \leqslant L^A}}
\ell^{-2}
\sum_{\ell_1\mid L}
\mu(\ell_1)
\frac{\lambda_g(\ell^2\ell_1^2)}{v( (\ell^2\ell_1^2,L))}
V\Bigl(
\frac{\ell^2\ell_1^2}{NM}
\Bigr) \ll (NM)^\varepsilon. \label{diagonalboundsquarefree}
\end{equation}
Ignoring the $2\pi i^{-k}$ factor, the off-diagonal terms in $S_{\textnormal{in}}$ are
\begin{equation*}
\sum_{n}
\frac{\lambda_g(n\ell_1^2)}{v( (n,L_1))\sqrt{n}}
V\Bigl(
\frac{n\ell_1^2}{NM}
\Bigr)
\sum_{c\equiv 0(R)}
\frac{S(\ell^2,n;c)}{c}
J_{k-1}\left( \frac{4\pi \ell\sqrt{n}}{c} \right).
\end{equation*}

It is convenient to apply Selberg's identity to the Kloosterman sums so that
\begin{equation*}
S(\ell^2,n;c) = \sum_{\ell_2|(\ell^2,n,c)} \ell_2 S\Big(\frac{n\ell^2}{\ell_2^2},1;\frac{c}{\ell_2}\Big).
\end{equation*}
We then let $q=\frac{c}{\ell_2}$ and pull out the new $\ell_2$ factors from $n$ and $\ell^2$. To take care of the term $v( (n\ell_2,L_1))=v( (\ell_2,L_1))v( (n,L_2))$ where $L_2:=\frac{L_1}{(\ell_2,L_1)}$, we set $\ell_3=(n,L_2)$ and use M\"obius inversion:
\begin{equation*}
  \sum_{\ell_2\mid \ell^2} \frac{1}{v( (\ell_2,L_1))\sqrt{\ell_2}}
  \sum_{\ell_3\mid L_2} \frac{1}{v(\ell_3)\sqrt{\ell_3}}
  \sum_{\ell_4\mid L_2}\frac{\mu(\ell_4)}{\sqrt{\ell_4}}
\sum_{n}
\frac{\lambda_g(n\ell_1^2\ell_2\ell_3\ell_4)}{\sqrt{n}}
V\Bigl(
\frac{n\ell_1^2\ell_2\ell_3\ell_4}{NM}
\Bigr)
\Delta^{\text{off}}_{k,R}(\frac{n\ell^2\ell_3\ell_4}{\ell_2},1),    
\end{equation*}
where we have, using the fact that $(\ell_2,R)=1$,
\begin{equation*}
\Delta^{\text{off}}_{k,R}(n,1)=
\sum_{q\equiv 0(R)}
\frac{1}{q} S\Big(n,1;q\Big)
J_{k-1}\left( \frac{4\pi \sqrt{n}}{q} \right).
\end{equation*}

We apply the Hecke relation
\begin{equation*}
  \lambda_g(n\ell_1^2\ell_2\ell_3\ell_4)
  =\sum_{\ell_5\mid (n,\ell_1^2\ell_2\ell_3\ell_4)}
  \mu(\ell_5)\chi(\ell_5)
  \lambda_g(\frac{n}{\ell_5})
  \lambda_g(\frac{\ell_1^2\ell_2\ell_3\ell_4}{\ell_5}).
\end{equation*}
The inner $n$-sum may therefore be rewritten as 
\begin{equation*}
  \sum_{\ell_5\mid \ell_1^2\ell_2\ell_3\ell_4}
 \mu(\ell_5)\chi(\ell_5)
 \frac{\lambda_g(\frac{\ell_1^2\ell_2\ell_3\ell_4}{\ell_5})}{\sqrt{\ell_5}}
 \sum_{n}
  \frac{\lambda_g(n)}{\sqrt{n}}
V\Bigl(
\frac{n\ell_1^2\ell_2\ell_3\ell_4\ell_5}{NM}
\Bigr)
\Delta^{\text{off}}_{k,R}(\frac{n\ell^2\ell_3\ell_4\ell_5}{\ell_2},1)
.
\end{equation*}

We let $D=\frac{\ell^2\ell_3\ell_4\ell_5}{\ell_2}$. Note that $(D,R)=1$. Breaking the $n$-sum into dyadic segments of length $Z\leqslant (\frac{NM}{\ell_1^2\ell_2\ell_3\ell_4\ell_5})^{1+\varepsilon}$ through a smooth partition of unity with $h$ a smooth function compactly supported on $[1/2,5/2]$, it remains to estimate the inner sums over $q$ and $n$ which may be written as
\begin{equation*}
  \sigma_Z:=
  \sum_{q\equiv 0(R)} \frac{1}{q}
 \sum_{n}
  \frac{\lambda_g(n)}{\sqrt{n}}
S\Big(n D,1;q\Big)
  J_{k-1}\left( \frac{4\pi \sqrt{nD}}{q} \right)
h\left(\frac{n}{Z}\right).
\end{equation*}
As in the prime level case of \S\ref{sec:pf}, the Bessel function bounds in \S\ref{sec:pre:bessel} along with an application of Weil's bound for Kloosterman sums, allows us to truncate our $q$-sum to one of length $q\leqslant (DZ)^B$ up to an error term of size $(DZ)^{-C}R^{-1}$ for some positive $B$ and $C$.  An application of Lemma~\ref{lem:VoronoiApp} gives that 
\begin{equation*}
  \sigma_Z  \ll \sum_{M_2|M} \sum\limits_{\substack{R \leqslant q \leqslant (DZ)^B \\ q\equiv 0 (R) \\ (q,M)=M/M_2}} 
  \frac{1}{q} \frac{P^2}{(1+P)^{3}} \left(\sqrt{\dfrac{Z}{M_2}}+q_2(1+P) \sqrt{\dfrac{M_2}{Z}}
  \right)(DMZ)^{\varepsilon}
\end{equation*}
with $P=\frac{\sqrt{DZ}}{q}$ and $q_2=\frac{q}{(D,q)}$. Note, we have used that $(M,D)=1$ which implies $(M,q_2)=(M,q)$.

 As in the prime level case, we now split the above inner $q$-sum into two parts based on the size of $P$ relative to $1$. Thus, the sum $\sigma_Z$ is bounded by
\begin{equation*}	
\begin{aligned}
&\sum_{M_2|M} \left\{\sum_{\substack{q \leqslant \sqrt{DZ} \\ q\equiv 0 (R M/M_2) }} \frac{1}{qP} 
	\left(\sqrt{\dfrac{Z}{M_2}}+q_2 P \sqrt{\dfrac{M_2}{Z}}
  \right)
  +\sum_{\substack{\sqrt{DZ} \leqslant q \leqslant (DZ)^B\\ q\equiv 0 (RM/M_2)}}\frac{P^2}{q}
  \left(\sqrt{\dfrac{Z}{M_2}}+q_2 \sqrt{\dfrac{M_2}{Z}}
  \right)\right\}(DMZ)^{\varepsilon}
  \\
	\ll & 
	  R^{-1} \left(  \frac{\sqrt{Z}}{\sqrt{M}} + \sqrt{DM}  \right) (DMZ)^{\varepsilon} \ll  R^{-1} \left(  \frac{\sqrt{N}}{\ell_1\sqrt{\ell_2 \ell_3 \ell_4 \ell_5}} + \sqrt{DM}  \right)(DNM)^{\varepsilon}.
\end{aligned}
\end{equation*}
 
Furthermore, we recall that $D=\frac{\ell^2\ell_3\ell_4\ell_5}{\ell_2}$. Therefore, combining the above, it remains to treat the sum over the $\ell_\bullet$, that is: 
\begin{equation*}
  \sum_{\substack{\ell\mid L^\infty \\ \ell \leqslant L^A}} \ell^{-1}
  \sum_{\ell_1\mid L}
  v(\ell_1)^{-1}
  \sum_{\ell_2 \mid \ell^2} 
  \frac{1}{v( (\ell_2,L_1))\sqrt{\ell_2}}  
    \sum_{\ell_3\mid L_2}
  \frac{1}{v(\ell_3)\sqrt{\ell_3}}
  \sum_{\ell_4\mid L_2}
  \frac{1}{\sqrt{\ell_4}}
  \sum_{\ell_5\mid \ell_1^2\ell_3\ell_4\ell_2} \frac{\sqrt{D}}{\sqrt{\ell_5}}.
\end{equation*}
By trivial estimates and an application of Lemma \ref{l:smooth} one bounds the remaining terms above by $N^{\varepsilon}$ for each $L|N$.  Therefore, we see that our first moment average satisfie
\begin{equation*}
S \ll \left(1+ \sum_{LR=N} \frac{1}{LR} \left(\sqrt{N}+\sqrt{M}\right)\right)(NM)^{\varepsilon} \ll \left(1 + \frac{\sqrt{M}}{N}\right) (NM)^{\varepsilon}.
\end{equation*}

\section{Proof of Lemma~\ref{lem:VoronoiApp}}
\label{sec:pflem}
Let $Z\ge 1$ and let $k$ and $\kappa$ be fixed positive integers.  Let $q, D, M$ be positive integers with $M$ square-free.  Let $g\in \SpM$ be a newform of weight $\kappa$, level $M$ and nebentypus~$\chi$.  In this section, we consider sums of the form
\begin{equation}\label{SZD-precise}
 S_Z(D; q):=\sum_{n}
  \frac{\lambda_g(n)}{\sqrt{n}}
S\Big(n D,1;q\Big)
  J_{k-1}\left( \frac{4\pi \sqrt{nD}}{q} \right)
h\left(\frac{n}{Z}\right)
\end{equation}
where $h$ is a smooth function, compactly supported in $[1/2,5/2]$ with bounded derivatives. The aim is to establish Lemma~\ref{lem:VoronoiApp}.

Setting $D_1 := (D,q)$ and $D_2:=D/D_1$, we note that $S(nD,1;q)=S(nD_2D_1,1;q)$ so that $S_Z(D;q)$ equals
$$
 \sideset{}{^\ast}\sum_{\alpha(q)} e\left(\frac{\alpha}{q}\right)\sum_{n}
  \frac{\lambda_g(n)}{\sqrt{n}}e\left(n\frac{\bar{\alpha}D_2}{q/D_1}\right)J_{k-1}\left( \frac{4\pi \sqrt{nD}}{q} \right)
h\left(\frac{n}{Z}\right).
$$
We are now in position to apply the Vorono\"i formula (Lemma~\ref{lem:voronoi}) to the inner sum over $n$.  Note that $\bar{\alpha}$ was first chosen such that $\alpha \bar{\alpha} \equiv 1(q)$ so that we also have $\alpha \bar{\alpha}  \equiv 1(q/D_1)$.
Set $M_2:=\frac{M}{(q/D_1,M)}$ (we don't assume $(D,M)=1$ so it could be that $(M_2,q)>1$ but this won't affect the argument). The inner $n$-sum becomes, up to some bounded multiplicative factors
\begin{equation} 
  \frac{D_1}{q \sqrt{M_2}}
  \sideset{}{^\ast}  \sum_{\alpha (q)}
  e\left(\frac{\alpha}{q}\right)
  \sum_{n}
  \lambda_{g^*}(n)
  e\left( -n\frac{\alpha \overline{D_2 M_2}}{q/D_1} \right) I_q(n) \label{afterVoronoi}
\end{equation}
where $M_2 \overline{M_2} \equiv 1 (q/D_1)$, $D_2 \overline{D_2} \equiv 1 (q/D_1)$ and
\begin{equation*} 
  I_q(n)= 
  \int_{0}^{\infty}
  h\left(\frac{x}{Z}\right)x^{-\Mdemi}
  J_{k-1}
  \left( \frac{4\pi \sqrt{xD}}{q} \right)
  J_{\kappa -1}\left( \frac{4\pi D_1 \sqrt{nx}}{q \sqrt{M_2}} \right)dx.
\end{equation*}
The sum over $\alpha$ may be written as
$$
 \sideset{}{^\ast}   \sum_{\alpha (q)}
 e\left( \frac{\alpha }{q}- \frac{n\alpha \overline{D_2M_2}}{q/D_1} \right)=S(0,1-nD_1\overline{D_2M_2};q),
$$
a Ramanujan sum. Therefore, \eqref{afterVoronoi} is reduced to
\begin{equation}
  \label{arithprog}
  \frac{D_1}{\sqrt{M_2}}\sum_{q_1|q}^{}
  \frac{\mu(q_1)}{q_1} \sum_{\substack{n\ge 1\\ D_1\overline{D_2M_2} n \equiv  \;(q/q_1)}}
  \lambda_{g^*}(n) I_q(n).
\end{equation}
We see that necessarily $D_1$ is coprime with $q/q_1$ otherwise the congruence cannot be satisfied. This implies that $D_1\mid q_1$ and $D_1|| q$. Then $D_2$ is coprime with $q/q_1$ and also because of the M\"obius function, $D_1$ is square-free. These conditions may also be seen by inspecting the Kloosterman sum $S(nD,1;q)$ we have at the beginning in~\eqref{SZD-precise}.

We are left with bounding a weighted sum of Hecke eigenvalues over an arithmetic progression. A change of variables in the integral in \eqref{arithprog} shows that
$$
I_q(n)=\sqrt{Z} \; I\left(\frac{\sqrt{DZ}}{q},\frac{D_1\sqrt{nZ}}{q\sqrt{M_2}}\right)$$
with $I(a,b)$ as in the statement of Lemma~\ref{lem:Icn}.
Recall that $P:=\sqrt{DZ}/q$.  The inner sum over $n$ may therefore be restricted, up to a negligible error term, by
\begin{equation} \label{nset}
  D_1n\equiv D_2 M_2 \pmod{q/q_1}, \quad
  \abs{1-\sqrt{\frac{D_1n}{D_2M_2}}}\ll P^{-1} (qDMZ)^{2\varepsilon}
\end{equation}
with $I_q(n)$ bounded by $\sqrt{Z} \dfrac{P^2}{(1+P)^3}$ in that range.  The number of $n$ satisfying \eqref{nset} is 
$$
\ll_\varepsilon \left(1+\dfrac{D_2 M_2}{D_1}\dfrac{(1+P)}{P^2}\dfrac{q_1}{q}\right)(qDMZ)^{\varepsilon}.
$$ 
Thus, one establishes the final bound (recall that $D_1|q_1$)
$$
S_Z(D;q) \ll_\varepsilon \frac{\sqrt{Z}}{\sqrt{M_2}} \dfrac{P^2}{(1+P)^3}\left(1+\frac{D M_2}{D_1 q}\frac{(1+P)}{P^2}\right)(qDMZ)^{\varepsilon}.
$$
which may also be written as in the statement of Lemma~\ref{lem:VoronoiApp}.


\subsection*{Acknowledgments}  A first version of this manuscript has been worked out at the Institute for Advanced Studies during the special year 2009/2010 on Analytic Number Theory. It is a pleasure to thank the Institute and the organizers for excellent working conditions. We thank Paul Nelson for comments and providing us with an early version of~\cite{Nelson:stable} and Gergely Harcos and Dinakar Ramakrishnan for helpful comments. Also we thank the referee for a careful reading. The first author is supported by the Sloan fellowship BR2011-083 and the NSF grant DMS-1068043.


\bibliographystyle{plain}

\begin{bibdiv}
\begin{biblist}

\bib{Blom:GL3}{article}{
      author={Blomer, Valentin},
       title={Subconvexity for twisted {$L$}-functions on {$GL(3)$}},
     journal={To appear in Amer. J. Math.},
}

\bib{CI00}{article}{
      author={Conrey, J.~B.},
      author={Iwaniec, H.},
       title={The cubic moment of central values of automorphic
  {$L$}-functions},
        date={2000},
        ISSN={0003-486X},
     journal={Ann. of Math. (2)},
      volume={151},
      number={3},
       pages={1175\ndash 1216},
}

\bib{FW08}{article}{
      author={Feigon, Brooke},
      author={Whitehouse, David},
       title={Averages of central {$L$}-values of {H}ilbert modular forms with
  an application to subconvexity},
        date={2009},
     journal={Duke Math. J.},
      volume={149},
      number={2},
       pages={347\ndash 410},
         url={http://dx.doi.org/10.1215/00127094-2009-041},
}

\bib{G}{incollection}{
      author={Goldfeld, Dorian},
       title={Analytic and arithmetic theory of {P}oincar\'e series},
        date={1979},
   booktitle={Journ\'ees {A}rithm\'etiques de {L}uminy ({C}olloq. {I}nternat.
  {CNRS}, {C}entre {U}niv. {L}uminy, {L}uminy, 1978)},
      series={Ast\'erisque},
      volume={61},
   publisher={Soc. Math. France},
     address={Paris},
       pages={95\ndash 107},
      review={\MR{556667 (81i:10030)}},
}

\bib{GZ}{article}{
      author={Gross, B.},
      author={Zagier, D.},
       title={Heegner points and derivatives of {$L$}-series},
        date={1986},
        ISSN={0020-9910},
     journal={Invent. Math.},
      volume={84},
      number={2},
       pages={225\ndash 320},
}

\bib{Harc02}{article}{
      author={Harcos, G.},
       title={Uniform approximate functional equation for principal
  {$L$}-functions},
        date={2002},
        ISSN={1073-7928},
     journal={Int. Math. Res. Not.},
      number={18},
       pages={923\ndash 932},
        note={Erratum, ibid. 2004, no.~13, 659--660},
}

\bib{HM06}{article}{
      author={Harcos, G.},
      author={Michel, Ph.},
       title={The subconvexity problem for {R}ankin-{S}elberg {$L$}-functions
  and equidistribution of {H}eegner points. {II}},
        date={2006},
        ISSN={0020-9910},
     journal={Invent. Math.},
      volume={163},
      number={3},
       pages={581\ndash 655},
}

\bib{HoMu}{article}{
      author={Holowinsky, R.},
      author={Munshi, R.},
       title={Level aspect subconvexity for {R}ankin-{S}elberg
  {$L$}-functions},
        date={2012},
      eprint={http://arxiv.org/abs/1203.1300},
}

\bib{Ivic:nonnegative}{article}{
      author={Ivi{\'c}, Aleksandar},
       title={On sums of {H}ecke series in short intervals},
        date={2001},
        ISSN={1246-7405},
     journal={J. Th\'eor. Nombres Bordeaux},
      volume={13},
      number={2},
       pages={453\ndash 468},
         url={http://jtnb.cedram.org/item?id=JTNB_2001__13_2_453_0},
      review={\MR{MR1879668 (2002j:11051)}},
}

\bib{book:Iwan:topic}{book}{
      author={Iwaniec, Henryk},
       title={Topics in classical automorphic forms},
      series={Graduate Studies in Mathematics},
   publisher={American Mathematical Society},
     address={Providence, RI},
        date={1997},
      volume={17},
        ISBN={0-8218-0777-3},
      review={\MR{MR1474964 (98e:11051)}},
}

\bib{Iwaniec:conversations}{incollection}{
      author={Iwaniec, Henryk},
       title={Conversations on the exceptional character},
        date={2006},
   booktitle={Analytic number theory},
      series={Lecture Notes in Math.},
      volume={1891},
   publisher={Springer},
     address={Berlin},
       pages={97\ndash 132},
         url={http://dx.doi.org/10.1007/978-3-540-36364-4_3},
      review={\MR{2277659 (2008a:11096)}},
}

\bib{book:IK04}{book}{
      author={Iwaniec, Henryk},
      author={Kowalski, Emmanuel},
       title={Analytic number theory},
      series={American Mathematical Society Colloquium Publications},
   publisher={American Mathematical Society},
     address={Providence, RI},
        date={2004},
      volume={53},
        ISBN={0-8218-3633-1},
}

\bib{ILS00}{article}{
      author={Iwaniec, Henryk},
      author={Luo, Wenzhi},
      author={Sarnak, Peter},
       title={Low lying zeros of families of {$L$}-functions},
        ISSN={0073-8301},
     journal={Inst. Hautes \'Etudes Sci. Publ. Math.},
      number={91},
       pages={55\ndash 131 (2001)},
}

\bib{book:JLII}{book}{
      author={Jacquet, H.},
       title={Automorphic forms on {${\rm GL}(2)$}. {P}art {II}},
   publisher={Springer-Verlag},
     address={Berlin},
        date={1972},
        note={Lecture Notes in Mathematics, Vol. 278},
}

\bib{KMV-nonvanishing}{article}{
      author={Kowalski, E.},
      author={Michel, Ph.},
      author={VanderKam, J.},
       title={Non-vanishing of high derivatives of automorphic {$L$}-functions
  at the center of the critical strip},
        date={2000},
        ISSN={0075-4102},
     journal={J. Reine Angew. Math.},
      volume={526},
       pages={1\ndash 34},
         url={http://dx.doi.org/10.1515/crll.2000.074},
      review={\MR{1778299 (2001h:11068)}},
}

\bib{KMV02}{article}{
      author={Kowalski, E.},
      author={Michel, Ph.},
      author={VanderKam, J.},
       title={Rankin-{S}elberg {$L$}-functions in the level aspect},
        date={2002},
        ISSN={0012-7094},
     journal={Duke Math. J.},
      volume={114},
      number={1},
       pages={123\ndash 191},
}

\bib{LR03}{article}{
      author={Lapid, Erez},
      author={Rallis, Stephen},
       title={On the nonnegativity of {$L({\frac{1}{2}},\pi)$} for {${\rm
  SO}_{2n+1}$}},
        date={2003},
        ISSN={0003-486X},
     journal={Ann. of Math. (2)},
      volume={157},
      number={3},
       pages={891\ndash 917},
      review={\MR{MR1983784 (2004d:11039)}},
}

\bib{Lapid:RS}{article}{
      author={Lapid, Erez~M.},
       title={On the nonnegativity of {R}ankin-{S}elberg {$L$}-functions at the
  center of symmetry},
        date={2003},
        ISSN={1073-7928},
     journal={Int. Math. Res. Not.},
      number={2},
       pages={65\ndash 75},
         url={http://dx.doi.org/10.1155/S1073792803204013},
      review={\MR{MR1936579 (2003j:11054)}},
}

\bib{Li79}{article}{
      author={Li, Wen Ch'ing~Winnie},
       title={{$L$}-series of {R}ankin type and their functional equations},
        ISSN={0025-5831},
     journal={Math. Ann.},
      volume={244},
      number={2},
       pages={135\ndash 166},
}

\bib{Li:subconvex}{article}{
      author={Li, Xiaoqing},
       title={Bounds for {${\rm GL}(3)\times {\rm GL}(2)$} {$L$}-functions and
  {${\rm GL}(3)$} {$L$}-functions},
        date={2011},
     journal={Ann. of Math. (2)},
      volume={173},
      number={1},
       pages={301\ndash 336},
         url={http://dx.doi.org/10.4007/annals.2011.173.1.8},
}

\bib{LMY12}{article}{
      author={Liu, S.-C.},
      author={Masri, R.},
      author={Young, M.},
       title={Subconvexity and equidistribution of {H}eegner points in the
  level aspect},
      eprint={http://arxiv.org/abs/1206.3208},
}

\bib{Mich04}{article}{
      author={Michel, Ph.},
       title={The subconvexity problem for {R}ankin-{S}elberg {$L$}-functions
  and equidistribution of {H}eegner points},
        date={2004},
        ISSN={0003-486X},
     journal={Ann. of Math. (2)},
      volume={160},
      number={1},
       pages={185\ndash 236},
}

\bib{MR07}{incollection}{
      author={Michel, Ph.},
      author={Ramakrishnan, Dinakar},
       title={Consequences of the {G}ross-{Z}agier formulae: stability of
  average {$L$}-values, subconvexity, and non-vanishing mod {$p$}},
        date={2012},
   booktitle={Number theory, analysis and geometry},
   publisher={Springer},
     address={New York},
       pages={437\ndash 459},
         url={http://dx.doi.org/10.1007/978-1-4614-1260-1_20},
}

\bib{book:MV-mult}{book}{
      author={Montgomery, Hugh~L.},
      author={Vaughan, Robert~C.},
       title={Multiplicative number theory. {I}. {C}lassical theory},
      series={Cambridge Studies in Advanced Mathematics},
   publisher={Cambridge University Press},
     address={Cambridge},
        date={2007},
      volume={97},
        ISBN={978-0-521-84903-6; 0-521-84903-9},
      review={\MR{2378655 (2009b:11001)}},
}

\bib{Nelson:stable}{article}{
      author={Nelson, P.},
       title={Stable averages of central values of {R}ankin--{S}elberg
  {L}-functions: some new variants},
     journal={To appear},
}

\bib{Rama00}{article}{
      author={Ramakrishnan, Dinakar},
       title={Modularity of the {R}ankin-{S}elberg {$L$}-series, and
  multiplicity one for {${\rm SL}(2)$}},
        ISSN={0003-486X},
     journal={Ann. of Math. (2)},
      volume={152},
      number={1},
       pages={45\ndash 111},
}

\bib{cong:auto77:tate}{incollection}{
      author={Tate, J.},
       title={Number theoretic background},
   booktitle={Automorphic forms, representations and {$L$}-functions (proc.
  sympos. pure math., oregon state univ., corvallis, ore., 1977), part 2},
      series={Proc. Sympos. Pure Math., XXXIII},
   publisher={Amer. Math. Soc.},
     address={Providence, R.I.},
       pages={3\ndash 26},
}

\bib{Temp:these}{thesis}{
      author={Templier, N.},
       title={Points sp\'eciaux et valeurs sp\'eciales de fontions {$L$}},
        type={Ph.D. Thesis},
        date={2008},
}

\bib{Temp:shifted}{article}{
      author={Templier, N.},
       title={A non-split sum of coefficients of modular forms},
        date={2011},
     journal={Duke Math. J.},
      volume={157},
      number={1},
       pages={109\ndash 165},
}

\bib{Wald85}{article}{
      author={Waldspurger, J.-L.},
       title={Sur les valeurs de certaines fonctions {$L$} automorphes en leur
  centre de sym\'etrie},
        date={1985},
        ISSN={0010-437X},
     journal={Compositio Math.},
      volume={54},
      number={2},
       pages={173\ndash 242},
         url={http://www.numdam.org/item?id=CM_1985__54_2_173_0},
      review={\MR{MR783511 (87g:11061b)}},
}

\bib{book:WW}{book}{
      author={Whittaker, E.~T.},
      author={Watson, G.~N.},
       title={A course of modern analysis},
      series={Cambridge Mathematical Library},
   publisher={Cambridge University Press},
     address={Cambridge},
        date={1996},
        ISBN={0-521-58807-3},
        note={Reprint of the fourth (1927) edition},
      review={\MR{MR1424469 (97k:01072)}},
}

\bib{YZZ:GZ}{article}{
      author={Yuan, X.},
      author={Zhang, S-W.},
      author={Zhang, W.},
       title={Gross-{Z}agier {F}ormula on {S}himura {C}urves},
     journal={Ann. Math. Studies},
}

\bib{Zhan01b}{article}{
      author={Zhang, Shou-Wu},
       title={Gross-{Z}agier formula for {${\rm GL}\sb 2$}},
        date={2001},
        ISSN={1093-6106},
     journal={Asian J. Math.},
      volume={5},
      number={2},
       pages={183\ndash 290},
}

\end{biblist}
\end{bibdiv}

\end{document}